%% 
%% Copyright 2019-2020 Elsevier Ltd
%% 
%% This file is part of the 'CAS Bundle'.
%% --------------------------------------
%% 
%% It may be distributed under the conditions of the LaTeX Project Public
%% License, either version 1.2 of this license or (at your option) any
%% later version.  The latest version of this license is in
%%    http://www.latex-project.org/lppl.txt
%% and version 1.2 or later is part of all distributions of LaTeX
%% version 1999/12/01 or later.
%% 
%% The list of all files belonging to the 'CAS Bundle' is
%% given in the file `manifest.txt'.
%% 
%% Template article for cas-sc documentclass for 
%% single column output.

\documentclass[a4paper,fleqn,longmktitle]{cas-sc}

\usepackage[numbers]{natbib}
\usepackage{amssymb}
\usepackage{lineno,hyperref}
\usepackage{graphicx}
\usepackage{subfig}
\usepackage{tikz,nicefrac,pifont,fp}
\usetikzlibrary{decorations.markings}
\usetikzlibrary{calc,positioning,decorations.pathmorphing,decorations.pathreplacing,fixedpointarithmetic}
\usepackage{figs/figstyle}

%%%Author macros
\def\tsc#1{\csdef{#1}{\textsc{\lowercase{#1}}\xspace}}
\tsc{WGM}
\tsc{QE}
\tsc{EP}
\tsc{PMS}
\tsc{BEC}
\tsc{DE}

% A couple of exemplary definitions:

\newcommand{\sC}{\mathcal{C}}
\newcommand{\sP}{\mathcal{P}}
\newcommand{\K}{{\mathcal{K}}}

\newtheorem{theorem}{Theorem}
\newtheorem{lemma}[theorem]{Lemma}
\newproof{proof}{Proof}

%%%

\begin{document}
\let\WriteBookmarks\relax
\def\floatpagepagefraction{1}
\def\textpagefraction{.001}
\shorttitle{ }
\shortauthors{Freitas and Lee}
%\begin{frontmatter}

\title [mode = title]{Some results on structure of all arc-locally (out) in-semicomplete digraphs}

\author[1]{Lucas Ismaily Bezerra Freitas}[type=editor,
                        auid=000,bioid=1,]
%\cormark[1]
%\fnmark[1]
\ead{ismailybf@ic.unicamp.br}

\credit{Conceptualization of this study, Methodology, Software}

\address[1]{Institute of Computing, State University of Campinas, Campinas, Brazil}

\author[1]{Orlando Lee}[orcid=0000-0003-4462-3325]
%\cormark[1]
\fnmark[1]
\ead{lee@ic.unicamp.br}

\credit{Data curation, Writing - Original draft preparation}

%\cortext[cor1]{Corresponding author}
\cortext[cor2]{Principal corresponding author}
\fntext[fn2]{This author was supported by CNPq Proc. 303766/2018-2, CNPq Proc 425340/2016-3 and FAPESP Proc. 2015/11937-9.}

\begin{abstract}
Arc-locally semicomplete and arc-locally in-semicomplete digraphs were introduced by Bang-Jensen as a common generalization of both semicomplete and semicomplete bipartite digraphs in 1993. Later, Bang-Jensen (2004), Galeana-Sánchez and Goldfeder (2009) and Wang and Wang (2009) provided a characterization of strong arc-locally semicomplete digraphs. In 2009, Wang and Wang characterized strong arc-locally in-semicomplete digraphs. In 2012, Galeana-Sánchez and Goldfeder provided a characterization of all arc-locally semicomplete digraphs which generalizes some results by Bang-Jensen. In this paper, we characterize the structure of arbitrary connected arc-locally (out) in-semicomplete digraphs and arbitrary connected arc-locally semicomplete digraphs. 
\end{abstract}

%\begin{graphicalabstract}
%\includegraphics{figs/grabs.pdf}
%\end{graphicalabstract}

%\begin{highlights}
%\item Research highlights item 1
%\item Research highlights item 2
%\item Research highlights item 3
%\end{highlights}

\begin{keywords}
Arc-locally semicomplete digraph \sep
Arc-locally in-semicomplete digraph \sep
Generalization of tournaments \sep
Directed graph \sep
Arc-locally out-semicomplete digraph \sep
Perfect graph
\end{keywords}

\maketitle

\section{Introduction}
\label{intro}

We only consider finite digraphs without loops and multiple edges. In this section we give an overview of the literature related to the results we prove in this paper. We assume the reader is familiarized with graph theory terminology and we postpone formal definitions to Section~\ref{notation}. 

Some very important results in graph theory characterize a certain class of graphs (or digraphs) in terms of certain forbidden induced subgraphs (subdigraphs). In~\cite{alis}, Bang-Jensen introduced interesting classes of digraphs in terms of certain forbidden induced subgraphs that generalize both semicomplete and semicomplete bipartite digraphs. There are four different possible orientations of the $P_4$, see Figure~\ref{P4-orientacao}. Consider the digraphs $H_1$, $H_2$, $H_3$ and $H_4$ of Figure~\ref{P4-orientacao}. For $i \in \{1, 2, 3, 4\}$, we say that $D$ is an \emph{orientedly $\{H_i\}$-free digraph} if for all $H_i$ in $D$, the vertices $v_1$ and $v_4$ of $H_i$ are adjacent. The orientedly $\{H_{1}\}$-free digraphs (resp.,  orientedly $\{H_{2}\}$-free digraphs) are called \emph{arc-locally in-semicomplete digraphs} (resp., \emph{arc-locally out-semicomplete digraphs}), orientedly $\{H_{3}\}$-free digraphs are called \emph{3-quasi-transitive digraphs}, orientedly $\{H_{4}\}$-free digraphs are called \emph{3-anti-quasi-transitive digraphs} and orientedly $\{H_{1},H_{2}\}$-free digraphs are called \emph{arc-locally semicomplete digraphs}.

\begin{figure}[htbp]
\begin{center}
\subfloat[$H_1$]{
       \tikzset{middlearrow/.style={
	decoration={markings,
		mark= at position 0.6 with {\arrow{#1}},
	},
	postaction={decorate}
}}

\tikzset{shortdigon/.style={
	decoration={markings,
		mark= at position 0.45 with {\arrow[inner sep=10pt]{<}},
		mark= at position 0.75 with {\arrow[inner sep=10pt]{>}},
	},
	postaction={decorate}
}}

\tikzset{digon/.style={
	decoration={markings,
		mark= at position 0.4 with {\arrow[inner sep=10pt]{<}},
		mark= at position 0.6 with {\arrow[inner sep=10pt]{>}},
	},
	postaction={decorate}
}}

\begin{tikzpicture}[scale = 0.45]		
	\node (n3) [black vertex] at (9,5)  {};
	\node (n4) [black vertex] at (9,8)  {};
	\node (n1) [black vertex] at (4,8)  {};
	\node (n2) [black vertex] at (4,5)  {};
	
	\node (label_n3)  at (9.5,4.5)  {$v_3$};
	\node (label_n4)  at (9.5,8.5)  {$v_4$};
	\node (label_n1)  at (3.5,8.5)  {$v_1$};
	\node (label_n2)  at (3.5,4.5)  {$v_2$};

  \foreach \from/\to in {n1/n2,n2/n3,n4/n3}
    \draw[edge,middlearrow={>}] (\from) -- (\to);    
\end{tikzpicture}
}
\quad
\subfloat[$H_2$]{
       \tikzset{middlearrow/.style={
	decoration={markings,
		mark= at position 0.6 with {\arrow{#1}},
	},
	postaction={decorate}
}}

\tikzset{shortdigon/.style={
	decoration={markings,
		mark= at position 0.45 with {\arrow[inner sep=10pt]{<}},
		mark= at position 0.75 with {\arrow[inner sep=10pt]{>}},
	},
	postaction={decorate}
}}

\tikzset{digon/.style={
	decoration={markings,
		mark= at position 0.4 with {\arrow[inner sep=10pt]{<}},
		mark= at position 0.6 with {\arrow[inner sep=10pt]{>}},
	},
	postaction={decorate}
}}

\begin{tikzpicture}[scale = 0.45]		
	\node (n3) [black vertex] at (9,5)  {};
	\node (n4) [black vertex] at (9,8)  {};
	\node (n1) [black vertex] at (4,8)  {};
	\node (n2) [black vertex] at (4,5)  {};
	
	\node (label_n3)  at (9.5,4.5)  {$v_3$};
	\node (label_n4)  at (9.5,8.5)  {$v_4$};
	\node (label_n1)  at (3.5,8.5)  {$v_1$};
	\node (label_n2)  at (3.5,4.5)  {$v_2$};

  \foreach \from/\to in {n2/n1,n2/n3,n3/n4}
    \draw[edge,middlearrow={>}] (\from) -- (\to);    
\end{tikzpicture}
}

\quad
\subfloat[$H_3$]{
       \tikzset{middlearrow/.style={
	decoration={markings,
		mark= at position 0.6 with {\arrow{#1}},
	},
	postaction={decorate}
}}

\tikzset{shortdigon/.style={
	decoration={markings,
		mark= at position 0.45 with {\arrow[inner sep=10pt]{<}},
		mark= at position 0.75 with {\arrow[inner sep=10pt]{>}},
	},
	postaction={decorate}
}}

\tikzset{digon/.style={
	decoration={markings,
		mark= at position 0.4 with {\arrow[inner sep=10pt]{<}},
		mark= at position 0.6 with {\arrow[inner sep=10pt]{>}},
	},
	postaction={decorate}
}}

\begin{tikzpicture}[scale = 0.45]		
	\node (n3) [black vertex] at (9,5)  {};
	\node (n4) [black vertex] at (9,8)  {};
	\node (n1) [black vertex] at (4,8)  {};
	\node (n2) [black vertex] at (4,5)  {};
	
	\node (label_n3)  at (9.5,4.5)  {$v_3$};
	\node (label_n4)  at (9.5,8.5)  {$v_4$};
	\node (label_n1)  at (3.5,8.5)  {$v_1$};
	\node (label_n2)  at (3.5,4.5)  {$v_2$};

  \foreach \from/\to in {n1/n2,n2/n3,n3/n4}
    \draw[edge,middlearrow={>}] (\from) -- (\to);    
\end{tikzpicture}
}
\quad
\subfloat[$H_4$]{
       \tikzset{middlearrow/.style={
	decoration={markings,
		mark= at position 0.6 with {\arrow{#1}},
	},
	postaction={decorate}
}}

\tikzset{shortdigon/.style={
	decoration={markings,
		mark= at position 0.45 with {\arrow[inner sep=10pt]{<}},
		mark= at position 0.75 with {\arrow[inner sep=10pt]{>}},
	},
	postaction={decorate}
}}

\tikzset{digon/.style={
	decoration={markings,
		mark= at position 0.4 with {\arrow[inner sep=10pt]{<}},
		mark= at position 0.6 with {\arrow[inner sep=10pt]{>}},
	},
	postaction={decorate}
}}

\begin{tikzpicture}[scale = 0.45]		
	\node (n3) [black vertex] at (9,5)  {};
	\node (n4) [black vertex] at (9,8)  {};
	\node (n1) [black vertex] at (4,8)  {};
	\node (n2) [black vertex] at (4,5)  {};
	
	\node (label_n3)  at (9.5,4.5)  {$v_3$};
	\node (label_n4)  at (9.5,8.5)  {$v_4$};
	\node (label_n1)  at (3.5,8.5)  {$v_1$};
	\node (label_n2)  at (3.5,4.5)  {$v_2$};

  \foreach \from/\to in {n2/n1,n2/n3,n4/n3}
    \draw[edge,middlearrow={>}] (\from) -- (\to);    
\end{tikzpicture}
}
\caption{\centering Orientations of the $P_4$.}
\label{P4-orientacao}
\end{center}
\end{figure}

Several structural results for the classes defined above are known. In~\cite{bang2004}, Bang-Jensen provided a characterization for strong arc-locally semicomplete digraphs, but Galeana-Sánchez and Goldfeder in~\cite{galeana2009} and Wang and Wang in~\cite{bang2004} independently pointed out that one family of strong arc-locally semicomplete digraphs was missing. In~\cite{wang2009structure}, Wang and Wang characterized strong arc-locally in-semicomplete digraphs. In~\cite{Galeana-Sanchez2010}, Galeana-Sánchez, Goldfeder and Urrutia characterized strong 3-quasi-transitive digraphs. To the best of our knowledge, no characterization for the class of 3-anti-quasi-transitive digraphs is known. In~\cite{wang2014}, Wang defined a subclass of 3-anti-quasi-transitive digraphs. A digraph $D$ is a \emph{3-anti-circulant digraph} if for any four distinct vertices $x_1, x_2, x_3, x_4 \in V(D)$, if $x_1x_2$, $x_3x_2$ and $x_3x_4$ are edges in $E(D)$, then $x_4x_1$ is in $E(D)$. Wang~\cite{wang2014} characterized the structure of 3-anti-circulant digraphs containing a cycle factor and showed that the structure is very close to semicomplete and semicomplete bipartite digraphs. In~\cite{galeana2012}, Galeana-Sánchez and Goldfeder extended the Bang-Jensen results in \cite{bang2004} and characterized all digraphs arc-locally semicomplete, this is the only class among the ones defined previously that has a characterization for arbitrary digraphs. In this paper, we characterize the structure of arbitrary arc-locally (out) in-semicomplete digraphs and arbitrary arc-locally semicomplete digraphs. We show that the structure of these digraphs is very similar to diperfect digraphs. 

The rest of this paper is organized as follows. In Section~\ref{notation}, we present the basic concepts of digraphs and the notation used. In Section~\ref{arc-in-semi}, we show that if $D$ is a connected arc-locally (out) in-semicomplete digraph, then $D$ is diperfect, or $D$ admits a special partition of its vertices, or $D$ has a clique cut. In Section~\ref{arc-semi}, we show that if $D$ is a connected arc-locally semicomplete digraph, then $D$ is either a diperfect digraph or an odd extended cycle of length at least five.

\section{Notation}
\label{notation}

We consider that the reader is familiar with the basic concepts of graph theory. Thus, for details that are not present in this paper, we refer the reader to~\cite{bang2008digraphs,Bondy08}.

Let $D$ be a digraph with vertex set $V(D)$ and edge set $E(D)$. Given two vertices $u$ and $v$ of $V(D)$, we say that $u$ \emph{dominates} $v$, denoted by $u \to v$, if $uv \in E(D)$. We say that $u$ and $v$ are \emph{adjacent} if $u \to v$ or $v \to u$. A digraph $H$ is a \emph{subdigraph} of $D$ if $V(H)\subseteq V(D)$ and $E(H) \subseteq E(D)$. If every edge of $E(D)$ with both vertices in $V(H)$ is in $E(H)$, then we say that $H$ is \emph{induced} by $X = V(H)$, we write $H = D[X]$. We say that $H$ is an induced subdigraph of $D$ if there is $X \subseteq V(D)$ such that $H = D[X]$. If every pair of distinct vertices of $D$ are adjacent, we say that $D$ is a \emph{semicomplete digraph}. The \emph{underlying graph} of a digraph $D$, denoted by $U(D)$, is the simple graph defined by $V(U(D))= V(D)$ and $E(U(D))= \{uv : u $ and $v$ are adjacent in $D\}$. Whenever it is appropriate, we may borrow terminology from undirected graphs to digraphs. For instance, we may that a digraph $D$ is connected if $U(D)$ is connected. The \emph{inverse digraph} of $D$ is the digraph with vertex set $V(D)$ and edge set $\{uv : vu \in E(D)\}$.

A \emph{path} $P$ in a digraph $D$ is a sequence of distinct vertices $P = v_1v_2 \dots v_k$, such that for all $v_i \in V(P)$, $v_iv_{i+1} \in E(D)$, with $1 \leq i \leq k-1$. We say that $P$ is a path that \emph{starts} at $v_1$ and \emph{ends} at $v_k$. We define the \emph{length} of $P$ as $k-1$. We denote by $P_k$ the class of isomorphism of a path of length $k-1$. For disjoint subsets $X$ and $Y$ of $V(D)$ (or subdigraphs of $D$), we say that $X$ \emph{reaches} $Y$ if there are $u \in X$ and $v \in Y$ such that there exists a path from $u$ to $v$ in $D$. The \emph{distance} between two vertices $u,v \in V(D)$, denoted by $dist(u,v)$, is the length of the shortest path from $u$ to $v$. We can extend the concept of distance to subsets of $V(D)$ or subdigraphs of $D$, that is, the distance from $X$ to $Y$ is $dist(X,Y)= \min\{dist(u,v): u\in X$ and $v \in Y \}$. A \emph{cycle} $C$ is a sequence of vertices $C = v_1v_2 \dots v_kv_1$, such that $v_1v_2 \dots v_k$ is a path, $v_kv_1 \in E(D)$ and $k>1$. We define the \emph{length} of $C$ as k. If $k$ is odd, then we say that $C$ is an \emph{odd cycle}. We say that $D$ is an \emph{acyclic digraph} if $D$ does not contain cycles. We say that $C$ is a \emph{non-oriented cycle} if $C$ is not a cycle in $D$, but $U(C)$ is a cycle in $U(D)$. In particular, if a non-oriented cycle $C$ has length three, then we say that $C$ is a \emph{transitive triangle}. 

Let $G$ be an undirected graph. A \emph{clique} in $G$ is a subset $X$ of $V(G)$ such that $G[X]$ is complete. The \emph{clique number} of $G$, denoted by $\omega(G)$, is the size of maximum clique of $G$. A subset $S$ of $V(G)$ is \emph{stable} if every pair of vertices in $S$ are pairwise non-adjacent. A \emph{(proper) coloring} of $G$ is a partition of $V(G)$ into stable sets $\{S_1,\ldots, S_k\}$. The \emph{chromatic number} of $G$, denoted by $\chi(G)$, is the cardinality of a minimum coloring of $G$. A graph $G$ is \emph{perfect} if for every induced subgraph $H$ of $G$, equality $\omega(H)=\chi(H)$ holds. We say that a digraph $D$ is \emph{diperfect} if $U(D)$ is perfect.

Let $D$ be a connected digraph. We say that $D$ is \emph{strong} if for each pair of vertices $u,v \in V(D)$, then there exists a path from $u$ to $v$ in $D$. A \emph{strong component} of $D$ is a maximal induced subgraph of $D$ which is strong. Let $Q$ be a strong component of $D$. We define $\K^-(Q)$ (resp., $\K^+(Q)$) as the set of strong components that reach (resp., are reached by) $Q$. We say that $Q$ is an \emph{initial strong component} if there exists no vertex $v$ in $D-V (Q)$ such that $v$ dominates some vertex of $Q$. The vertex set $B \subset V(D)$ is a \emph{vertex cut} if $D-B$ is a disconnected digraph. If $D[B]$ is a semicomplete digraph, then we say that $B$ is a \emph{clique cut}. For disjoint subsets $X$ and $Y$ of $V(D)$ (or subdigraphs of $D$), we say that $X$ and $Y$ are \emph{adjacent} if some vertex of $X$ and some vertex of $Y$ are adjacent; $X \to Y$ means that every vertex of $X$ dominates every vertex of $Y$, $X \Rightarrow Y$ means that there is no edge from $Y$ to $X$ and $X \mapsto Y$ means that both of $X \to Y$ and $X \Rightarrow Y$ hold. When $X = \{x\}$ or $Y = \{y\}$, we simply write the element, as in $x \mapsto Y$ and $X \mapsto y$.

\section{Arc-locally in-semicomplete digraphs}
\label{arc-in-semi}

Let us start with a class of digraphs which is related to arc-locally in-semicomplete digraphs. Let $C$ be a cycle of length $k \geq 2$ and let $X_1 , X_2 , \ldots , X_k$ be disjoint stable sets. The \emph{extended cycle} $C:= C[ X_1 , X_2 , \ldots , X_k]$ is the digraph with vertex set $X_1 \cup X_2 \cup \cdots \cup X_k$ and edge set $\{x_ix_{i+1} : x_i \in X_i, x_{i+1} \in X_{i + 1}, i=1,2,\ldots,k \}$, where subscripts are taken modulo $k$. So  $X_1 \mapsto X_2 \mapsto \cdots \mapsto X_k \mapsto X_1$ (see Figure~\ref{circ-estentido}).

\begin{figure}[ht]
\begin{center}
    \tikzset{middlearrow/.style={
	decoration={markings,
		mark= at position 0.6 with {\arrow{#1}},
	},
	postaction={decorate}
}}

\tikzset{shortdigon/.style={
	decoration={markings,
		mark= at position 0.45 with {\arrow[inner sep=10pt]{<}},
		mark= at position 0.75 with {\arrow[inner sep=10pt]{>}},
	},
	postaction={decorate}
}}

\tikzset{digon/.style={
	decoration={markings,
		mark= at position 0.4 with {\arrow[inner sep=10pt]{<}},
		mark= at position 0.6 with {\arrow[inner sep=10pt]{>}},
	},
	postaction={decorate}
}}

\def \n {5}
\def \radius {3cm}
\def \margin {8}

\begin{tikzpicture}[scale = 0.66]

%V_1 ----------------------
    \draw (4,5) circle (30pt);
	\node (v1_1) [black vertex] at (3.8,4.8)  {};
	\node (v1_2) [black vertex] at (4.2,5.3)  {};

%V_2 ----------------------
	\draw (9,5) circle (30pt); 
	\node (v2_1) [black vertex] at (9,5)  {};

%V_3 ----------------------
	\draw (9,8) circle (30pt);
	\node (v3_1) [black vertex] at (9.5,8)  {};
	\node (v3_2) [black vertex] at (8.5,8)  {};
	\node (v3_3) [black vertex] at (9,8)  {};

%V_4 -----------------------------------------
	\draw (6.5,10) circle (30pt);
	\node (v4_1) [black vertex] at (6.5,10.5) {};
	\node (v4_2) [black vertex] at (6.5,9.5) {};

%V_5 -----------------------------------------
    \draw (4,8) circle (30pt);	
	\node (v5_1) [black vertex] at (4,8)  {};

%label ---------------------------------------	
	\node (label_n4)  at (6.5,11.5) {$X_4$};
	\node (label_n2)  at (10.,4.0)  {$X_2$};
	\node (label_n3)  at (10.0,9.0)  {$X_3$};
	\node (label_n5)  at (3.0,9.0)  {$X_5$};
	\node (label_n1)  at (3.0,4.0)  {$X_1$};
	
% arestas------------------------------------

  \foreach \from/\to in {v1_1/v2_1,v1_2/v2_1,v2_1/v3_1,v2_1/v3_2,v2_1/v3_3,v3_1/v4_1,v3_2/v4_1,v3_3/v4_1,v3_1/v4_2,v3_2/v4_2,v3_3/v4_2,v4_1/v5_1,v4_2/v5_1,v5_1/v1_1,v5_1/v1_2}
    \draw[edge,middlearrow={>}] (\from) -- (\to);    
\end{tikzpicture}
\caption{\centering Example of an extended cycle.}
\label{circ-estentido}
\end{center}
\end{figure}

In~\cite{wang2009structure}, Wang and Wang characterized the structure of strong arc-locally in-semicomplete digraphs. We have omitted the definition of a $T$-digraph, because it is a family of digraphs that does not play an important role in this paper. For ease of reference, we state the following result.

\begin{theorem}[Wang and Wang~\cite{wang2009structure}, 2009]
\label{arc-in}
Let $D$ be a strong arc-locally in-semicomplete digraph, then $D$ is either a semicomplete digraph, a semicomplete bipartite digraph, an extended cycle or a $T$-digraph.
\end{theorem}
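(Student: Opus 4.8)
Theorem~\ref{arc-in} is quoted from~\cite{wang2009structure}; I only indicate the line of attack one would naturally take. The first move is to restate the hypothesis in a local form: $D$ is arc-locally in-semicomplete exactly when, for every arc $uv$, every vertex of $N^-(u)$ is adjacent to every vertex of $N^-(v)$ — this is merely the assertion that no copy of $H_1$ occurs with $v_1$ and $v_4$ non-adjacent. Since the property is inherited by induced subdigraphs, it is natural to argue by induction on $|V(D)|$, studying a minimal counterexample, and to split the analysis according to whether $U(D)$ is bipartite and according to the girth of $D$.

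\textbf{The bipartite case.} Suppose $U(D)$ is bipartite with parts $A$ and $B$. As $D$ is strong, every vertex has both an in- and an out-neighbour, and every in-neighbourhood lies entirely in the opposite part. If every vertex of $A$ is adjacent to every vertex of $B$, then $D$ is a semicomplete bipartite digraph and we are done; otherwise fix a non-adjacent pair $a\in A$, $b\in B$. The local lemma then forbids the simultaneous presence of arcs $b\to u$ and $a\to v$ for any arc $u\to v$ with $u\in A$, $v\in B$ (it forces a forbidden $H_1$ on $b,u,v,a$, or else a direct edge $ab$), and symmetrically with $A$ and $B$ exchanged. Pushing these constraints around a strongly connected digraph forces the vertex set to organise into consecutive layers $X_1,X_2,\dots,X_k$ with $X_i\mapsto X_{i+1}$; one then checks each $X_i$ is stable and that no other arcs exist, so that $D$ is an extended cycle of even length — unless the propagation stalls in one of a bounded number of degenerate ways, which is precisely where the bipartite members of the $T$-digraph family occur.

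\textbf{The non-bipartite case.} Here $D$ contains an odd cycle; let $C$ be a shortest \emph{directed} cycle of odd length. A short argument with the girth limits the chords of $C$, and the local lemma severely restricts how a vertex outside $C$ may dominate, or be dominated by, vertices of $C$ — an in-neighbour relation to one vertex of $C$ forces adjacency with a neighbouring vertex of $C$ — so the vertices off $C$ fall into a few types according to where they plug in. If $C$ is a directed triangle and the relevant neighbourhoods contain no large stable set, these types collapse and $D$ is a semicomplete digraph; if instead $C$ is ``thin'', meaning every vertex off $C$ behaves like a copy of a vertex of $C$, one recovers $D=C[X_1,\dots,X_{2t+1}]$, an odd extended cycle; the remaining possibilities form the (again bounded) list of $T$-digraphs. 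It is also convenient to rule out a clique cut in a minimal counterexample, so that only the separator-free configuration must be examined.

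\textbf{Where the difficulty lies.} Once the local lemma is available, the bulk of the above is routine propagation; the real work — and the reason the statement carries an exceptional family rather than a clean trichotomy — is to enumerate exactly the degenerate ways in which the layering, or the attachment-to-$C$ analysis, can terminate, and to verify that these are precisely the $T$-digraphs. The most delicate point is the overlap regime in which $D$ contains both a digon and a directed triangle, where one must decide, essentially vertex by vertex, between ``semicomplete'' and an in-between $T$-digraph. Managing this case bookkeeping, rather than any single idea, is the crux.
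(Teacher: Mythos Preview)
The paper does not prove Theorem~\ref{arc-in}: it is quoted verbatim from Wang and Wang~\cite{wang2009structure} as a known result, with no argument given, and is used only as background (in fact the paper explicitly says it omits even the definition of a $T$-digraph because that family plays no role). You correctly note this at the outset of your proposal.

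Since there is no proof in the paper to compare against, the only question is whether your sketch stands on its own. As written it does not: it is an outline of a plausible strategy, not a proof, and you say as much. The substantive steps --- that the layering in the bipartite case really does propagate cleanly to an extended cycle, that the attachment types in the non-bipartite case are exactly what you claim, and above all that the ``degenerate'' terminations are precisely the $T$-digraphs --- are asserted rather than argued. This is fine as a roadmap, but if the intent were to supply an actual proof for the paper, you would need either to reproduce Wang and Wang's case analysis in full (including a definition of $T$-digraphs, which the present paper never gives) or simply to cite~\cite{wang2009structure}, which is what the authors do.
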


We start with the proof of a simple and useful lemma.

\begin{lemma}
\label{arc-in-circuito}
If $D$ is an arc-locally in-semicomplete digraph, then $D$ contains no induced non-oriented odd cycle of length at least five. 
\end{lemma}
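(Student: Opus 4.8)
The plan is to argue by contradiction: suppose that the arc-locally in-semicomplete digraph $D$ contains an induced non-oriented odd cycle $C = v_0 v_1 \cdots v_{k-1} v_0$ with $k \ge 5$, where indices are read modulo $k$. Since $C$ is induced, two of its vertices are adjacent in $D$ exactly when they are consecutive on $C$; in particular, because $k \ge 5$, any two vertices of $C$ at cyclic distance exactly $3$ (for instance $v_i$ and $v_{i+3}$) are non-adjacent in $D$. The engine of the proof is the defining property of arc-locally in-semicomplete digraphs in the following form: if $a,b,c,d$ are four distinct vertices with $a \to b$, $b \to c$ and $d \to c$, then $\{a,b,c,d\}$ carries a copy of $H_1$, hence $a$ and $d$ must be adjacent. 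So it suffices to locate, inside $C$, such a configuration in which $a$ and $d$ are at cyclic distance $3$.

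First I would treat the case in which no edge of $C$ is a digon, so each $v_iv_{i+1}$ is a single arc. Call $v_i$ a \emph{source} (resp.\ a \emph{sink}) of $C$ if it dominates (resp.\ is dominated by) both of its neighbours on $C$. Since $C$ is not a directed cycle, reading the arc directions cyclically shows that $C$ has at least one source and at least one sink, that sources and sinks alternate around $C$, and hence that $C$ splits into an even number $2s \ge 2$ of maximal directed segments (the stretches between consecutive source/sink vertices), each of length at least $1$, whose lengths sum to $k$. As $k$ is odd, some segment has length at least $2$. Let $v_m$ be the sink in which such a segment ends; then $v_{m-2} \to v_{m-1} \to v_m$ with both arcs inside the segment, while $v_{m+1} \to v_m$ because $v_m$ is a sink. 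Applying the engine with $(a,b,c,d) = (v_{m-2}, v_{m-1}, v_m, v_{m+1})$ forces $v_{m-2}$ and $v_{m+1}$ to be adjacent; but these vertices are at cyclic distance $3$ on $C$, a contradiction.

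It remains to rule out a digon on some edge $v_a v_{a+1}$ of $C$. Here I would trace the orientations of the edges of $C$ outward from the digon: using the engine — with the two arcs of the digon supplying the needed in-arcs at $v_a$ or $v_{a+1}$ — every choice of orientation that avoids an immediate copy of $H_1$ with endpoints at cyclic distance $3$ forces the next edge along $C$ to be oriented the ``same way'', so that the process eventually orients all of $C$ as a directed cycle; but a directed $C_k$ with an extra digon again contains a copy of $H_1$ whose two endpoints lie at cyclic distance $3$ on $C$, contradicting that $C$ is induced. This last case is the only real obstacle: the digon-free case is clean because of the parity argument, whereas excluding digons requires a somewhat tedious case-by-case propagation. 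Combining the two cases shows that $D$ has no induced non-oriented odd cycle of length at least five.
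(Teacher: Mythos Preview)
Your digon-free case is correct and is precisely the idea the paper uses, only phrased in different language. The paper simply takes a \emph{maximum directed path} $P=u_1u_2\cdots u_\ell$ inside $C$; the parity observation you make (an even number of monotone segments summing to an odd length) is exactly what guarantees $\ell\ge 3$, and your sink $v_m$ is the paper's terminal vertex $u_\ell$, whose other $C$-neighbour $w$ must dominate it by maximality. Then $u_{\ell-2}\to u_{\ell-1}\to u_\ell\leftarrow w$ is the forbidden $H_1$.

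Where you diverge from the paper is in splitting off the digon case, and this split is both unnecessary and, as you wrote it, not quite right. The maximum-directed-path argument handles digons for free: if $v_a\leftrightarrow v_{a+1}$ is a digon then, whatever the orientation of the edge $v_{a+1}v_{a+2}$, one of $v_a\to v_{a+1}\to v_{a+2}$ or $v_{a+2}\to v_{a+1}\to v_a$ is a directed $2$-path, so the maximum directed path still has at least three vertices and the same endgame applies. (If that path happens to cover all of $C$, closing it up would make $C$ a directed cycle, contradicting ``non-oriented''; so the other neighbour of its last vertex really is an in-neighbour.) By contrast, your proposed outward propagation from the digon does not behave as advertised: for instance, if $v_0\leftrightarrow v_1$, $v_2\to v_1$, and (as your engine then forces) $v_2\to v_3$, the vertex $v_2$ is a source with no in-neighbour on $C$, so the forward step to $v_3v_4$ produces no $H_1$ and the ``same way'' orientation is not forced locally. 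One can still close the argument by running the propagation the other way around the cycle until it collides, but that is exactly the maximum-path argument in disguise. Your final sentence is also slightly off target: a directed $C_k$ with one extra digon is already a directed cycle, so it is excluded by the hypothesis ``non-oriented'' before you ever look for an $H_1$ inside it.

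In short: keep your segment/parity argument, drop the case split, and state it as ``take a longest directed path in $C$''; that is the paper's proof.
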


\begin{proof}
Assume that $D$ contains an induced non-oriented odd cycle of length at least five. Let $P=u_1 u_2 \dots u_k$ be a maximum path in $C$. Note that $P$ has at least three vertices, since $C$ is odd and has at least five vertices. Let $w$ be the vertex of $C$ distinct from $u_{k-1}$ that dominates $u_k$. Since $D$ is arc-locally in-semicomplete, the vertices $w$ and $u_ {k-2} $ must be adjacent, a contradiction.
\qed
\end{proof}

The next lemma states that not containing an induced odd cycle of length at least five is a necessary and sufficient condition for an arc-locally in-semicomplete digraph to be diperfect. Note that if a digraph $D$ contains no induced odd cycle of length at least five, then $D$ also contains no induced odd extended cycle of the same length. For the next lemma, we need the famous Strong Perfect Graph Theorem~\cite{chudnovsky2006strong}.

\begin{theorem}[Chudnovsky, Robertson, Seymour and Thomas~\cite{chudnovsky2006strong}, 2006]
\label{perf}
A graph $G$ is perfect if, and only if, $G$ does not contain an induced odd cycle of length at least five or its complement as an induced subgraph. 
\end{theorem}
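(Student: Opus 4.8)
The plan is to split the biconditional into its two implications, because they differ enormously in difficulty. For the necessity direction — a perfect graph contains no induced odd cycle of length at least five and no induced complement of one — I would use the fact that perfection is hereditary: every induced subgraph of a perfect graph is again perfect, so it suffices to check that the two families of forbidden graphs themselves violate $\omega = \chi$. For an odd cycle $C = C_{2k+1}$ with $k \geq 2$ one computes directly that $\omega(C) = 2$ while $\chi(C) = 3$, so $C$ is imperfect. For its complement $\overline{C}$ one has $\omega(\overline{C}) = k$ (a maximum independent set of $C$ has size $k$) and $\chi(\overline{C}) = k+1$ (this is the clique cover number of $C$, since a maximum matching covers $2k$ of the $2k+1$ vertices and leaves one uncovered), so $\overline{C}$ is imperfect as well. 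Hence no perfect graph can contain either as an induced subgraph.

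The sufficiency direction — that every graph with no such induced subgraph (a \emph{Berge graph}) is perfect — is the real content of the statement and is precisely the Strong Perfect Graph Theorem itself. I would not attempt to reprove it but rather invoke it from Chudnovsky, Robertson, Seymour and Thomas, since any genuine argument is far beyond the scope of a short sketch. The strategy underlying it, which I would only outline, is a structural decomposition theorem: every Berge graph is either \emph{basic} (a bipartite graph, the line graph of a bipartite graph, a double split graph, or the complement of one of these) or it admits one of a short list of structural cuts — a $2$-join, the $2$-join of its complement, or a balanced skew partition. One then verifies that each basic class is perfect and that none of these decompositions can appear in a minimal imperfect graph, so that a minimal counterexample to perfection cannot exist.

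The main obstacle, by an overwhelming margin, is the decomposition theorem itself: showing that the mere absence of odd holes and odd antiholes forces one of these few structural outcomes is the heart of the $150$-plus page argument of Chudnovsky, Robertson, Seymour and Thomas, and nothing short of that proof establishes it. For the purposes of the present paper the appropriate move is exactly the one taken here — to cite the result as a black box — because it is needed only as a tool, to promote the local forbidden-subdigraph condition on arc-locally in-semicomplete digraphs (via Lemma~\ref{arc-in-circuito}) into global perfection of the underlying graph.
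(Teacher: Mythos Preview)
Your proposal is correct and in fact more detailed than the paper's treatment: the paper does not prove Theorem~\ref{perf} at all but simply states it as a cited external result (the Strong Perfect Graph Theorem) to be used as a black box in the proof of Lemma~\ref{arc-no-cc-perf}. Your decision to supply the elementary necessity direction and then invoke \cite{chudnovsky2006strong} for sufficiency is entirely appropriate and matches the paper's intent.
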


\begin{lemma}
\label{arc-no-cc-perf}
Let $D$ be an arc-locally in-semicomplete digraph. Then, $D$ is diperfect if and only if $D$ contains no induced odd cycle of length at least five.
\end{lemma}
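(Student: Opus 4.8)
The plan is to prove both directions, one of which is essentially trivial and the other of which reduces, via the Strong Perfect Graph Theorem (Theorem~\ref{perf}), to ruling out an induced odd \emph{antihole} of length at least five in $U(D)$, given that there is no induced odd \emph{hole} of that length.

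First, the forward direction. Suppose $D$ is diperfect, so $U(D)$ is perfect. If $D$ contained an induced odd cycle $C$ of length at least five, consider the underlying graph $U(C)$: it is either an odd cycle $C_{2k+1}$ (if $C$ is a directed cycle in $D$) or, if $C$ is merely a non-oriented cycle, then by Lemma~\ref{arc-in-circuito} it cannot be induced and of length at least five, so this case does not arise. In the remaining case $U(C) = C_{2k+1}$ with $2k+1 \ge 5$ is an induced odd hole of $U(D)$, contradicting perfection of $U(D)$ by Theorem~\ref{perf}. Hence $D$ contains no induced odd cycle of length at least five. (Here I am using the phrasing in the paper that ``induced odd cycle of length at least five'' for a \emph{digraph} refers to an induced directed cycle; the non-oriented case is handled separately and already excluded by Lemma~\ref{arc-in-circuito}.)

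Now the harder reverse direction. Assume $D$ contains no induced odd cycle of length at least five; we must show $U(D)$ is perfect. By Theorem~\ref{perf} it suffices to show $U(D)$ contains neither an induced odd hole $C_{2k+1}$ ($k \ge 2$) nor an induced odd antihole $\overline{C_{2k+1}}$ ($k \ge 2$). The hole case: an induced $C_{2k+1}$ in $U(D)$ lifts to an induced subdigraph of $D$ on those $2k+1$ vertices whose underlying graph is $C_{2k+1}$; this subdigraph is either a directed odd cycle of length $2k+1$ — excluded by hypothesis — or an induced non-oriented odd cycle of length $2k+1 \ge 5$ — excluded by Lemma~\ref{arc-in-circuito}. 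So there is no induced odd hole.

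The antihole case is the main obstacle and where the arc-local in-semicompleteness is really used. Suppose for contradiction $D$ has an induced subdigraph $F$ with $U(F) = \overline{C_{2k+1}}$, $k \ge 2$. The smallest such antihole is $\overline{C_5} = C_5$, which is already an odd hole and thus excluded above; so we may assume $k \ge 3$, i.e. the antihole has at least seven vertices. The strategy is to exploit that in $\overline{C_{2k+1}}$ every vertex has high degree, so many arcs are present, and then to locate a copy of $H_1$ (the forbidden configuration for arc-locally in-semicomplete digraphs) whose endpoints $v_1, v_4$ are \emph{non}-adjacent in $U(F)$ — a contradiction. Concretely, label the vertices $w_0, w_1, \dots, w_{2k}$ cyclically so that in $U(F)$ the non-edges are exactly the consecutive pairs $w_i w_{i+1}$ (indices mod $2k+1$); thus $w_i$ and $w_j$ are adjacent in $D$ iff $j \notin \{i-1, i+1\}$. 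Pick a non-edge, say $\{w_0, w_1\}$: these are the candidate $v_1$ and $v_4$. We need an arc pattern $w_0 \to a$, $a \to b$, $w_1 \to b$ forming $H_1$ with $a, b \notin \{w_0, w_1\}$ and $a \ne b$. Vertices $w_3$ and $w_{2k-1}$ are both adjacent to $w_0$, to $w_1$, and to each other (since $2k+1 \ge 7$ guarantees these indices are distinct and pairwise non-consecutive); the idea is to look at the directions of the arcs among $\{w_0, w_1, w_3, w_{2k-1}\}$ and at the arc between $w_3$ and $w_{2k-1}$, and argue by a short case analysis that some orientation yields an induced $H_1$ on a $4$-set with non-adjacent endpoints $w_0, w_1$ — using arc-local in-semicompleteness to derive the required adjacency and hence the contradiction. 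If a single quadruple does not suffice for every orientation pattern, I would iterate over the $\Theta(k)$ common neighbours of the non-edge $\{w_0,w_1\}$ and a pigeonhole/extremal argument on arc directions to force the pattern; the antihole's density (minimum degree $2k-2$) makes such an argument go through. I expect this antihole-elimination step — making the case analysis clean and genuinely using the $H_1$-freeness rather than brute force — to be the crux; everything else is bookkeeping with the Strong Perfect Graph Theorem and Lemma~\ref{arc-in-circuito}.
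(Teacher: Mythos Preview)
Your overall strategy matches the paper's: invoke the Strong Perfect Graph Theorem, eliminate odd holes via the hypothesis together with Lemma~\ref{arc-in-circuito}, and then rule out odd antiholes of length at least seven by producing an $H_1$-configuration whose two endpoints form a non-edge of the antihole.

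The gap is that the antihole step is not actually carried out, and your sketch as stated does not work. The quadruple $\{w_0,w_1,w_3,w_{2k-1}\}$ is insufficient: if every arc between $\{w_3,w_{2k-1}\}$ and $\{w_0,w_1\}$ is oriented towards $\{w_0,w_1\}$, then neither $w_0$ nor $w_1$ has an out-arc inside the quadruple and no $H_1$ with endpoints $w_0,w_1$ arises. Worse, one may orient \emph{all} arcs of $F$ incident with $w_0$ or $w_1$ into $\{w_0,w_1\}$; then $w_0$ and $w_1$ have out-degree zero in $F$, so no $H_1$ in $F$ can have either as an endpoint, and no ``pigeonhole over common neighbours of $\{w_0,w_1\}$'' can rescue the argument. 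You must be prepared to change which non-edge you contradict as the orientations are revealed.

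The paper does exactly this, concretely and briefly: take seven consecutive antihole vertices $v_1,\dots,v_7$ and split on the direction of the arc between $v_2$ and $v_4$. If $v_2\to v_4$, then $v_1\to v_6$ together with $v_4$ adjacent to $v_6$ would force $v_1$ adjacent to $v_2$, so $v_6\to v_1$; similarly $v_3\to v_7$ would force $v_2$ adjacent to $v_3$, so $v_7\to v_3$; and now $v_6\to v_1$, $v_7\to v_3$ with $v_1$ adjacent to $v_3$ force $v_6$ adjacent to $v_7$, the final contradiction. The case $v_4\to v_2$ is symmetric. The point is that the argument pivots across three different non-edges ($v_1v_2$, $v_2v_3$, $v_6v_7$) rather than committing to a single one in advance.
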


\begin{proof}

Let $D$ be an arc-locally in-semicomplete digraph. By Lemma~\ref{arc-in-circuito}, the digraph $D$ contains no induced non-oriented odd cycle of length at least five. Thus, it follows that every induced odd cycle of length at least five in $U(D)$ is, also, a induced odd cycle in $D$. 

First, if $D$ is diperfect, then the result follows by Theorem~\ref{perf}. To prove sufficiency, assume that $D$ is not diperfect. Since $D$ contains no induced odd cycle of length at least five, by Theorem~\ref{perf}, the graph $U(D)$ contains an induced complement, denoted by $\overline{U(C)}$, of an odd cycle $U(C)$ of length at least five. By definition of complement, two vertices are adjacent in $\overline{U(C)}$ (and in $D$) if, and only if, they are not consecutive in $U(C)$. Since the complement of a $C_5$ is a $C_5$, we may assume that $C$ contains at least seven vertices. Let $v_{1},v_{2},v_{3},v_{4},v_{5},v_{6},v_{7}$ be consecutive vertices in $C$. Recall that since $D$ is arc-locally in-semicomplete, if $x,y,u$ and $v$ are distinct vertices such that $u$ and $v$ are adjacent, $x\to u$ and $y\to v$, then $x$ and $y$ must be adjacent in $D$. 

The remainder of the proof is divided into two cases, depending on whether $v_2 \to v_4$ or $v_4 \to v_2$.

\textbf{Case 1.} Assume that $v_2 \to v_4$. If $v_1 \to v_6$, then since $v_4$ and $v_6$ are adjacent, it follows that $v_1$ and $v_2$ are adjacent, a contradiction. So, we can assume that $v_6 \to v_1$. If $v_3\to v_7$, then since $v_2\to v_4$ and, $v_4$ and $v_7$ are adjacent, it follows that $v_2$ and $v_3$ are adjacent in $D$, a contradiction. So $v_7\to v_3$. Finally, since $v_7 \to v_3$, $v_6 \to v_1$ and, $v_1$ and $v_3$ are adjacent, it follows that $v_6$ and $v_7$ are adjacent in $D$, a contradiction.

\textbf{Case 2.} Assume that $v_4 \to v_2$. If $v_3 \to v_6$, then since $v_2$ and $v_6$ are adjacent, it follows that $v_3$ and $v_4$ are adjacent, a contradiction. So, we have $v_6 \to v_3$. If $v_7 \to v_5$, since $v_6 \to v_3$ and, $v_5$ and $v_3$ are adjacent, it follows that $v_7$ and $v_6$ are adjacent in $D$, a contradiction. So, we have $v_5 \to v_7$. Finally, since $v_5 \to v_7$, $v_4 \to v_2$ and, $v_2$ and $v_7$ are adjacent, it follows that $v_5$ and $v_4$ are adjacent in $D$, a contradiction.

Thus $D$ is diperfect.
\qed
\end{proof}

Next, we prove some properties of an arc-locally in-semicomplete digraph $D$, when $D$ has a strong component that induces an odd extended cycle of length at least five. To do this, we will use the following auxiliary results. 

\begin{lemma}[Wang e Wang~\cite{wang2011independent}, 2011]
\label{arc-in-lema_2}
Let $D$ be an arc-locally in-semicomplete digraph and let $H$ be a non-trivial strong subdigraph of $D$. For any $v \in V(D)-V(H)$, if there exists a path from $v$ to $H$, then $v$ and $H$ are adjacent. In particular, if $H$ is a strong component, then $v$ dominates some vertex of $H$.
\end{lemma}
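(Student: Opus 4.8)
The plan is to prove the first assertion by contradiction, using a shortest path as the working object, and then to deduce the ``in particular'' clause from it. So suppose $v\in V(D)-V(H)$ has a path to $V(H)$ yet is adjacent to no vertex of $H$, and fix a \emph{shortest} path $P=w_0w_1\cdots w_t$ from $v=w_0$ to $w_t\in V(H)$. Minimality gives $t\ge 2$ (else $v\to w_1\in V(H)$) and $w_0,\dots,w_{t-1}\notin V(H)$. The feature of $H$ used throughout is that, being non-trivial and strong, it is strongly connected; concretely, every vertex of $H$ has an in-neighbour in $V(H)$, and between any two of its vertices there is a path inside $H$.

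First I would run the base step at the end of $P$: pick $z_0\in V(H)$ with $z_0\to w_t$, so $z_0\ne w_{t-1}$. The four vertices $w_{t-2},z_0,w_{t-1},w_t$ are distinct (the $w$'s are distinct path vertices outside $V(H)$, $z_0\in V(H)$, and $z_0\ne w_t$ since $D$ is loopless), so the arc-locally in-semicomplete condition applied to $w_{t-2}\to w_{t-1}$ and $z_0\to w_t$ (their heads $w_{t-1},w_t$ being adjacent) forces $w_{t-2}$ and $z_0$ to be adjacent. If $t=2$ this already says $v=w_{t-2}$ is adjacent to $H$, a contradiction; if $w_{t-2}\to z_0$ then $w_0\cdots w_{t-2}z_0$ is a path to $V(H)$ of length $t-1$, contradicting minimality. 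Hence $t\ge3$ and $z_0\to w_{t-2}$.

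The hard part is the iteration: I would transport the edge $z_0\to w_{t-2}$ step by step back along $P$ toward $v$, calling on the strong connectivity of $H$ each time. Formally, for $j=1,\dots,t-2$ I claim there is $z_{j-1}\in V(H)$ with $z_{j-1}\to w_{t-1-j}$ (for $j=1$ this is $z_0\to w_{t-2}$). Given this, choose an in-neighbour $z_j\in V(H)$ of $z_{j-1}$, so $z_j\ne z_{j-1}$; the vertices $w_{t-2-j},z_j,w_{t-1-j},z_{j-1}$ are again distinct ($w$'s outside $V(H)$ and consecutive on $P$, $z$'s inside $V(H)$), and since $z_{j-1}\to w_{t-1-j}$ their heads $w_{t-1-j},z_{j-1}$ are adjacent, so the arc-local condition applied to $w_{t-2-j}\to w_{t-1-j}$ and $z_j\to z_{j-1}$ makes $w_{t-2-j}$ and $z_j$ adjacent. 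If $j=t-2$ then $w_{t-2-j}=v$ is adjacent to $z_j\in V(H)$ — contradiction; if $j<t-2$ and $w_{t-2-j}\to z_j$ we get a path to $V(H)$ of length $t-1-j<t$ — contradiction; so otherwise $z_j\to w_{t-2-j}$, which is exactly the hypothesis for stage $j+1$. Since the stages run out at $j=t-2$, a contradiction is unavoidable, proving $v$ is adjacent to $H$.

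For the ``in particular'' clause, let $H$ be a strong component and $u\in V(H)$ adjacent to $v$. If $u\to v$, then combining a path from $v$ into $V(H)$, a path inside $H$ returning to $u$, and the edge $u\to v$ gives a closed walk through $u$ and $v$, so $v$ would lie in the strong component of $u$, i.e.\ in $H$ — impossible. Hence $v\to u$, so $v$ dominates a vertex of $H$. I expect the bookkeeping of the iteration (the distinctness checks and the index shifts) to be the only delicate point; everything else follows directly from the definition.
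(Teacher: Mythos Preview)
The paper does not supply its own proof of this lemma; it is quoted as a result of Wang and Wang~\cite{wang2011independent} and used as a black box. Your argument is a correct self-contained proof: the shortest-path setup, the base application of the arc-locally in-semicomplete condition at the arc $w_{t-1}w_t$, and the backward iteration along $P$ using in-neighbours inside $H$ all go through, with the distinctness checks justified by the partition of vertices into $V(H)$ and its complement. The deduction of the ``in particular'' clause from maximality of a strong component is also sound. So there is nothing to compare against in this paper, but your proof stands on its own.
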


\begin{lemma}[Wang e Wang~\cite{wang2011independent}, 2011]
\label{arc-in-lema_4}
Let $D$ be an arc-locally in-semicomplete digraph and let $K_1$ and $K_2$ be two distinct non-trivial strong components of $D$ with at least one edge from $K_1$ to $K_2$. Then either $K_1 \mapsto K_2$ or $K_1 \cup K_2$ is a bipartite digraph.
\end{lemma}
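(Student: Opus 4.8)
\emph{Two easy reductions, then a local step.} First, since there is an arc from $K_1$ to $K_2$ and $K_1,K_2$ are distinct strong components, there is no arc from $K_2$ to $K_1$ (otherwise $D[V(K_1)\cup V(K_2)]$ would be strong, contradicting maximality of $K_1$ and $K_2$); hence $K_1 \Rightarrow K_2$ holds automatically, and it remains to prove that $K_1 \to K_2$ or $U(D[V(K_1)\cup V(K_2)])$ is bipartite. The key local observation — call it \emph{diagonal backward propagation} — is: if $u\to w$ with $u\in V(K_1)$ and $w\in V(K_2)$, then $u'\to w'$ for every in-neighbour $u'$ of $u$ lying in $K_1$ and every in-neighbour $w'$ of $w$ lying in $K_2$. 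Indeed $u',w',u,w$ are pairwise distinct ($u',u\in V(K_1)$, $w',w\in V(K_2)$, and the two components are vertex-disjoint), so applying the arc-locally in-semicomplete condition to the arc $uw$ with the in-neighbours $u'$ and $w'$ forces $u'$ and $w'$ to be adjacent, and since $K_1\Rightarrow K_2$ this adjacency must be the arc $u'\to w'$. As $K_1$ and $K_2$ are non-trivial and strong, every vertex has an in-neighbour inside its own component; iterating the step along in-walks then gives that every vertex of $K_1$ dominates some vertex of $K_2$ and, dually, every vertex of $K_2$ is dominated by some vertex of $K_1$.

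\emph{Periods.} Being strong and non-trivial, each $K_i$ has a period $g_i$ and a partition of $V(K_i)$ into $g_i$ \emph{phase classes} $V^i_0,\dots,V^i_{g_i-1}$ with every arc going from $V^i_j$ to $V^i_{j+1}$ (indices modulo $g_i$). For a strong digraph, $U(K_i)$ is bipartite precisely when $g_i$ is even; when $g_i$ is odd, a shortest odd cycle of $U(K_i)$ is induced, hence by Lemma~\ref{arc-in-circuito} it is a directed triangle, a transitive triangle, or an induced directed $C_{2k+1}$ with $k\ge 2$ (alternatively one may feed $K_i$ to Theorem~\ref{arc-in}). Now fix a crossing arc $a_0\to b_0$. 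Running the backward step around closed walks of $K_1$ through $a_0$ while simultaneously walking backward in $K_2$ from $b_0$, and using that closed-walk lengths through $a_0$ eventually include all large multiples of $g_1$, one obtains that $a_0$ dominates \emph{every} vertex of $K_2$ whose phase class is congruent to that of $b_0$ modulo $d:=\gcd(g_1,g_2)$. Applying this to each crossing arc shows that whether a pair $(x,y)\in V(K_1)\times V(K_2)$ is a crossing arc depends only on the phase difference of $x$ and $y$ modulo $d$.

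\emph{The dichotomy.} If $d$ is even, then $g_1$ and $g_2$ are both even, so $K_1$ and $K_2$ are each bipartite via phase parity. One then shows that either all crossing arcs have phase differences of one fixed parity — in which case telescoping phase differences around an arbitrary cycle of $U(D[V(K_1)\cup V(K_2)])$ shows that the phase-parity $2$-colourings of $K_1$ and of $K_2$ glue, after a suitable global shift on $K_2$, into a proper $2$-colouring, so $U(D[V(K_1)\cup V(K_2)])$ is bipartite — or else crossing arcs of both parities occur, and then one is forced into $K_1\to K_2$. If $d$ is odd, one shows $K_1\to K_2$; when $d=1$ this is immediate from the previous paragraph together with the fact that every vertex of $K_1$ dominates some vertex of $K_2$.

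\emph{The main obstacle.} The delicate case is $d$ odd with $d>1$ (and, in the even case, the sub-case where crossing arcs of both parities occur): here the period argument alone leaves genuine gaps, and one must invoke the forbidden configuration $H_1$ directly. The idea is that the several crossing arcs already produced by backward propagation supply, at a suitable vertex of $K_2$, the \emph{second} in-neighbour needed to apply the arc-locally in-semicomplete condition, and combining this with the short induced odd cycle guaranteed above lets one fill in all remaining crossing arcs and conclude $K_1\to K_2$. Making this filling-in work uniformly for the three shapes of short odd cycle, and according to whether that cycle lies in $K_1$ or in $K_2$, is where the real effort lies; the colour-gluing in the even case, by comparison, is routine bookkeeping.
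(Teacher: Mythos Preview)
The paper does not prove this lemma at all: it is quoted from Wang and Wang~\cite{wang2011independent} as an auxiliary result, with no argument supplied. So there is no ``paper's own proof'' to compare against; the only question is whether your proposal stands on its own.

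It does not, and you say so yourself. Your backward-propagation step is correct and the period/phase-class framework is a reasonable way to organise the information it produces, but the write-up stops precisely where the content begins. In the dichotomy you assert that when $d$ is odd and $d>1$, or when $d$ is even and crossing arcs of both parities occur, ``one is forced into $K_1\to K_2$'', and then in the final paragraph you concede that ``making this filling-in work uniformly \dots\ is where the real effort lies''. That is the whole lemma: the cases you have actually finished ($d=1$, and the single-parity even case) are the soft ones. A proof proposal that identifies the hard case and then declines to treat it is an outline, not a proof.

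There is also a simpler route you are overlooking. The companion results you already have available --- Lemma~\ref{arc-in-lema_2} (every vertex with a path to a non-trivial strong component dominates it) and Lemma~\ref{arc-in-lema_1} (a vertex dominating into a non-bipartite non-trivial strong component dominates it completely) --- dispose of the case where $K_2$ is non-bipartite in two lines: every $v\in V(K_1)$ reaches $K_2$, hence dominates into $K_2$, hence $v\mapsto K_2$. The substance of the lemma is therefore concentrated in the case where $K_2$ is bipartite but $D[V(K_1)\cup V(K_2)]$ is not; your phase machinery is heavier than needed for the easy direction and, as written, does not settle the hard one.
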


\begin{lemma}[Wang and Wang~\cite{wang2011independent}, 2011]
\label{arc-in-lema_1}
Let $D$ be an arc-locally in-semicomplete digraph and let $Q$ be a non-trivial strong component of $D$. Let $v$ be a vertex of $V(D)-V(Q)$ that dominates some vertex of $Q$. If $D[V(Q)]$ is non-bipartite, then $v \mapsto Q$.
\end{lemma}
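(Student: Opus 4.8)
The plan is to prove the two halves of $v \mapsto Q$ separately: that no vertex of $Q$ dominates $v$ (i.e.\ $Q \Rightarrow v$), and that $v$ dominates every vertex of $Q$ (i.e.\ $v \to Q$). The first half uses neither the arc-locally in-semicomplete hypothesis nor non-bipartiteness: if some $q_1 \in V(Q)$ dominated $v$, then, since $v$ dominates some vertex of $Q$ and $Q$ is strong, $V(Q) \cup \{v\}$ would induce a strong subdigraph of $D$, contradicting the maximality of the strong component $Q$. Hence $Q \Rightarrow v$, and in particular, whenever $v$ and a vertex $q \in V(Q)$ are adjacent we must have $v \to q$.

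For the second half, set $A := \{q \in V(Q) : v \to q\}$; by hypothesis $A \neq \emptyset$, and I fix some $q_0 \in A$. The arc-locally in-semicomplete condition yields the following closure property of $A$: if $p_0 p_1 p_2$ is a directed path in $Q$ with $p_2 \in A$ and $p_0 \neq p_2$, then $p_0 \in A$. Indeed, $v \to p_2$, $p_0 \to p_1$, and $p_1$ and $p_2$ are adjacent, so the four distinct vertices $v, p_0, p_1, p_2$ force $v$ and $p_0$ to be adjacent; by the first half, $v \to p_0$, i.e.\ $p_0 \in A$. The same argument applies along any directed walk of $Q$, since consecutive vertices of a walk are distinct and $v \notin V(Q)$; the case $p_0 = p_2$ is vacuous, as then $p_0 \in A$ already.

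Next I would exploit that $Q$ is strong and non-bipartite to obtain, for every $u \in V(Q)$, a directed walk of even length from $u$ to $q_0$. A strong digraph all of whose directed cycles are even has bipartite underlying graph --- via the canonical cyclic partition $V(Q) = V_0 \cup \cdots \cup V_{d-1}$ associated with its period $d$, which is then forced to be odd --- so $Q$ contains an odd directed cycle $C$. Given any $u$, a directed $u$--$q_0$ walk exists because $Q$ is strong; if its length is odd, I reroute it once around $C$ (reaching $C$ from $u$ and $q_0$ from $C$, again by strong connectivity), changing the parity. Processing such an even walk $u = p_0 p_1 \cdots p_{2m} = q_0$ from its end in steps of two and repeatedly applying the closure property gives $p_{2m} \in A$, then $p_{2m-2} \in A, \ldots$, then $p_0 = u \in A$. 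Therefore $A = V(Q)$, that is $v \to Q$, and combined with $Q \Rightarrow v$ this gives $v \mapsto Q$.

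I expect the closure property to be routine; the main obstacle is the parity argument --- establishing that a strong non-bipartite digraph has an odd directed cycle, and carefully handling the distinctness requirement of the arc-locally in-semicomplete condition when the rerouted walk revisits vertices.
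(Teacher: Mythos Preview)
The paper does not give its own proof of this lemma: it is quoted as an auxiliary result from Wang and Wang (2011) and used without argument. So there is nothing in the present paper to compare your proposal against.

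That said, your proof is correct and self-contained. The first half ($Q \Rightarrow v$) is exactly the maximality argument one expects. The closure property you extract---if $p_0p_1p_2$ is a directed walk in $Q$ and $p_2 \in A$, then $p_0 \in A$---is precisely what the arc-locally in-semicomplete hypothesis gives, and your treatment of the degenerate case $p_0 = p_2$ handles the only distinctness issue that arises when working with walks rather than paths. The parity step (a strong non-bipartite digraph contains an odd directed cycle, hence admits directed walks of both parities between any two vertices) is also correct; the standard justification is that if every directed cycle were even then the period $d$ would be even, and the cyclic decomposition $V_0,\ldots,V_{d-1}$ would yield a bipartition. Your phrase ``which is then forced to be odd'' is slightly ambiguous as written, but the intended statement---that non-bipartiteness forces $d$ to be odd, hence some directed cycle has odd length---is right. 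With the odd cycle in hand, your rerouting argument produces an even-length $u$--$q_0$ walk for every $u$, and stepping back two at a time finishes the proof.
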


Recall that, if $Q$ is a strong component of a digraph $D$, then $\K^-(Q)$ (resp., $\K^+(Q)$) is the set of strong components that reach (resp., are reached by) $Q$ in $D$.

\begin{lemma}
\label{lem-arc-cir-est}
Let $D$ be a non-strong arc-locally in-semicomplete digraph. Let $Q$ be a non-initial strong component of $D$ that induces an odd extended cycle of length at least five. Let $W = \cup_{K \in \K^-(Q)}V(K)$. Then, each of the following holds: 

\begin{enumerate}
\item[\rm{i}.] every strong component of $\K^+(Q)$ is trivial;
\item[\rm{ii}.] $W \mapsto Q$; 
\item[\rm{iii}.] $D[W]$ is a semicomplete digraph;
\item[\rm{iv}.] there exists a unique initial strong component that reaches $Q$ in $D$.
\end{enumerate}

\end{lemma}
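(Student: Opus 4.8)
The plan is to analyze how the strong components outside $Q$ interact with $Q$, relying heavily on the three structural lemmas of Wang and Wang (Lemmas~\ref{arc-in-lema_2}, \ref{arc-in-lema_4}, \ref{arc-in-lema_1}) together with Lemma~\ref{arc-in-circuito}. Throughout, note that $D[V(Q)]$ is an odd extended cycle of length at least five, hence non-bipartite. I would begin with part~(ii). Since $Q$ is non-initial, there is a vertex $v \notin V(Q)$ dominating some vertex of $Q$; for any such $v$, Lemma~\ref{arc-in-lema_1} gives $v \mapsto Q$. So it suffices to show that \emph{every} vertex of $W$ dominates some vertex of $Q$. If $K \in \K^-(Q)$ is a strong component, then $K$ reaches $Q$; if $K$ is non-trivial, Lemma~\ref{arc-in-lema_2} applied to the path from $K$ to $Q$ shows each vertex of $K$ dominates some vertex of $Q$, and then Lemma~\ref{arc-in-lema_1} upgrades this to $v \mapsto Q$ for each such $v$. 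If $K$ is trivial, say $K = \{v\}$, I would argue along a shortest path from $v$ to $Q$: the predecessor of $Q$ on this path lies in some component of $\K^-(Q)$ and maps onto $Q$, and then a short induction (or a direct application of Lemma~\ref{arc-in-lema_2} to the trivial subdigraph issue — actually one must be slightly careful, since Lemma~\ref{arc-in-lema_2} needs a non-trivial strong subdigraph, so instead I would use that $Q$ itself is non-trivial strong and Lemma~\ref{arc-in-lema_2} directly yields that $v$ is adjacent to $Q$; combined with the fact that $Q$ reaches nothing back through this edge once we know the arc direction, we get $v \to Q$) shows $v$ dominates a vertex of $Q$. Then Lemma~\ref{arc-in-lema_1} again gives $v \mapsto Q$. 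Hence $W \mapsto Q$.

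Next I would do part~(i). Let $K \in \K^+(Q)$, $K \neq Q$, and suppose for contradiction $K$ is non-trivial. Since $Q$ reaches $K$, Lemma~\ref{arc-in-lema_2} (applied with $H = K$, which is non-trivial strong) shows every vertex on the relevant path, in particular a vertex of $Q$, is adjacent to $K$; pushing along we get that $Q$ reaches $K$ via an edge, and then Lemma~\ref{arc-in-lema_4} applied to the pair $(Q', K)$ for the appropriate intermediate component forces either a $\mapsto$ relation or bipartiteness. The key point is to derive a contradiction with $Q$ being an odd extended cycle: take three consecutive vertices $x_1 \in X_1$, $x_2 \in X_2$, $x_3 \in X_3$ of the extended cycle inside $Q$ and a vertex $y$ of $K$ with (say) $x_2 \to y$; since $x_1 \to x_2$ and $x_3 \to$ (some vertex, using that the extended cycle has length $\geq 5$ so $x_3$ has an out-neighbor in $Q$ distinct from the earlier ones), the arc-locally in-semicomplete condition forces adjacencies among the $x_i$'s and $y$ that contradict either the structure of the extended cycle or Lemma~\ref{arc-in-circuito}. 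The cleanest route is probably: a non-trivial $K \in \K^+(Q)$ would, by Lemma~\ref{arc-in-lema_1} applied in the reverse direction — no, Lemma~\ref{arc-in-lema_1} concerns domination \emph{into} a non-bipartite component — so instead use that $Q \mapsto$-type relation cannot hold into a non-trivial $K$ without creating an induced non-oriented odd cycle through $Q$'s long odd structure and $K$. I expect this to be the most delicate case and would handle it by a careful small-configuration argument.

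For part~(iii), I would show $D[W]$ is semicomplete by taking two non-adjacent vertices $u, w \in W$ and deriving a contradiction. By~(ii), $u \mapsto Q$ and $w \mapsto Q$, so picking any $x \in V(Q)$ we have $u \to x$ and $w \to x$; but for the arc-locally in-semicomplete condition to force $u$ and $w$ adjacent we need them dominating \emph{adjacent} vertices of $Q$ via distinct in-neighbors — so instead take two adjacent vertices $x \to x'$ in $Q$ (which exist since $Q$ is non-trivial); then $u \to x'$ and $w \to x'$ alone do not suffice, but $u \to x$ and $w \to x'$ with $x \to x'$ does: since $D$ is arc-locally in-semicomplete and $x, x'$ are adjacent with $u \to x$, $w \to x'$, we conclude $u$ and $w$ are adjacent. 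Since $u \mapsto Q$ means $u$ dominates all of $Q$, in particular $u \to x$ and also $u \to x'$; likewise $w \to x$ and $w \to x'$. Applying the condition to the arc $x \to x'$ with dominators $u \to x$ and $w \to x'$ yields $u \sim w$. This gives semicompleteness of $D[W]$.

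Finally, part~(iv) follows from~(iii): the initial strong components that reach $Q$ all lie in $W$, and since $D[W]$ is semicomplete, any two of them are joined by an edge; but an edge between two distinct strong components is impossible in both directions, and an initial strong component has no in-edges from $D$, so two distinct initial strong components in $W$ would have to be non-adjacent in $D[W]$ — contradicting semicompleteness. Hence there is exactly one. I expect the main obstacle to be part~(i): ruling out non-trivial components reached by $Q$ requires combining the direction-forcing lemmas with the rigid odd-extended-cycle structure of $Q$ and Lemma~\ref{arc-in-circuito}, and getting the small case analysis exactly right — in particular making sure the edges one extracts from $Q$ really are present and really do trigger the forbidden configuration — is where the care is needed.
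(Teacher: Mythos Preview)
Your arguments for (ii), (iii), and (iv) are correct and match the paper's proof, though your treatment of (ii) is needlessly circuitous: Lemma~\ref{arc-in-lema_2} is stated for \emph{any} vertex $v$ with a path to a non-trivial strong component $H$, and here $H = Q$ is non-trivial. So for every $v \in W$ you immediately get that $v$ dominates a vertex of $Q$, and then Lemma~\ref{arc-in-lema_1} gives $v \mapsto Q$. There is no need to split into cases on whether the component of $v$ is trivial.

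The real gap is in (i), which you correctly sense is the crux but do not actually prove. Your attempted configuration --- one vertex $y \in K$ with $x_2 \to y$, together with $x_1 \to x_2$ and $x_3 \to x_4$ --- does not trigger the arc-locally in-semicomplete condition in a way that forces $x_1$ and $x_3$ (or any other non-consecutive pair) to be adjacent, because you never exhibit two \emph{distinct adjacent} vertices each dominated by a non-adjacent pair from $Q$. The paper's argument is short and uses exactly what non-triviality of $K$ buys you: an \emph{arc} $uv$ inside $K$. First, Lemma~\ref{arc-in-lema_2} (applied with $H=K$) gives an edge from $Q$ to $K$; then Lemma~\ref{arc-in-lema_4} together with $Q$ being non-bipartite forces $Q \mapsto K$. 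Now pick $x_1 \in X_1$ and $x_3 \in X_3$; since $Q \mapsto K$ we have $x_1 \to u$ and $x_3 \to v$, and since $u$ and $v$ are adjacent the arc-locally in-semicomplete condition forces $x_1$ and $x_3$ to be adjacent, contradicting the extended-cycle structure of $Q$ (length $\ge 5$). There is no appeal to Lemma~\ref{arc-in-circuito} or to any intermediate component $Q'$; the whole point of assuming $K$ non-trivial is to obtain that internal arc $uv$.
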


\begin{proof}
Let $Q:=Q[X_1,X_2,...,X_{2k+1}]$. be a non-initial strong component that induces an odd extended cycle of length at least five of $D$.

\rm{i}. Towards a contradiction, assume that there exists a non-trivial strong component $K$ in $\K^+(Q)$. By the definition of $\K^+(Q)$, there exists a path from $Q$ to $K$. By Lemma~\ref{arc-in-lema_2}, there must be some edge from $Q$ to $K$. Note that $D[V(Q)]$ is a non-bipartite digraph. Thus, it follows by Lemma~\ref{arc-in-lema_4} that $Q \mapsto K$. Let $uv$ be an edge of $K$ and, let $x_1 \in X_1$ and $x_3 \in X_3$ be vertices of $Q$. Since $x_1 \to u$, $x_3 \to v$ and $D$ is arc-locally in-semicomplete, then $x_1$ and $x_3$ are adjacent, a contradiction to the fact that $Q$ induces an extended cycle. Therefore, every strong component of $\K^+(Q)$ is trivial.

\rm{ii}. Let $v$ be a vertex of $W$. By the definition of $\K^-(Q)$ and $W$, there exists a path from $v$ to $Q$. By Lemma~\ref{arc-in-lema_2}, the vertex $v$ dominates some vertex of $Q$. Since $D[V(Q)]$ is a non-bipartite digraph, it follows by Lemma~\ref{arc-in-lema_1} that $v \mapsto Q$. 

\rm{iii}. Towards a contradiction, assume that there are two non-adjacent vertices $u$ and $v$ in $W$. By (ii), we have that $\{u,v\} \mapsto Q$. Let $xy$ be an edge of $Q$. Since $D$ is arc-locally in-semicomplete, $u \to x$ and $v \to y$, it follows that $u$ and $v$ are adjacent, a contradiction. So, all vertices in $W$ are adjacent, and therefore, the vertex set $W$ induces a semicomplete digraph.

\rm{iv}. Towards a contradiction, assume that $D$ has two initial strong components, say $K_1$ and $K_2$, that reach $Q$. By (iii), $D[W]$ is a semicomplete digraph. Since $\{V(K_1) \cup V(K_2)\} \subseteq W$,  the initial strong components $K_1$ and $K_2$ must be adjacent which is a contradiction. 
\qed
\end{proof}

For next lemma, we need the following auxiliary result. 

\begin{lemma}[Wang e Wang~\cite{wang2011independent}, 2011]
\label{arc-in-lema-inicial}
Let $D$ be a connected non-strong arc-locally in-semicomplete digraph. If there are more than one initial strong component, then all initial strong components are trivial.
\end{lemma}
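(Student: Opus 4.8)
The plan is to argue by contradiction. Suppose $D$ has at least two initial strong components and that one of them, say $K$, is non-trivial; let $K_2$ be a second initial strong component (possibly trivial). Since $D$ is connected, $U(D)$ contains a path joining a vertex of $V(K)$ to a vertex of $V(K_2)$; among all such paths I would fix a shortest one, $P = y_0y_1\cdots y_m$ with $y_0 \in V(K)$ and $y_m \in V(K_2)$. Minimality forces $y_0$ to be the only vertex of $P$ in $V(K)$ and $y_m$ the only one in $V(K_2)$, and it forces $m \ge 2$: if $m \le 1$ then $K$ and $K_2$ would be adjacent, and the single edge between them would enter one of them from outside, contradicting that both are initial.

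The heart of the argument is the claim that $P$ is actually a directed path, i.e.\ $y_i \to y_{i+1}$ for every $0 \le i \le m-1$, which I would prove by induction on $i$. The case $i = 0$ holds because $K$ is initial: the edge $y_0y_1$ of $U(D)$ cannot be $y_1 \to y_0$ since $y_1 \notin V(K)$. For the inductive step, assume $y_j \to y_{j+1}$ for all $j < i$ and, for contradiction, that $y_{i+1} \to y_i$. If $i \ge 2$, the four distinct vertices $y_{i-2}, y_{i-1}, y_i, y_{i+1}$ form the configuration $y_{i-2} \to y_{i-1} \to y_i \leftarrow y_{i+1}$, so the arc-locally in-semicomplete property makes $y_{i-2}$ and $y_{i+1}$ adjacent; then $y_0\cdots y_{i-2}y_{i+1}\cdots y_m$ is a strictly shorter path from $V(K)$ to $V(K_2)$, a contradiction. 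If $i = 1$, I would use that $K$ is non-trivial and strong to pick an in-neighbour $y_{-1} \in V(K)$ of $y_0$; note $y_{-1} \notin V(P)$, so $y_{-1}, y_0, y_1, y_2$ are distinct and form $y_{-1} \to y_0 \to y_1 \leftarrow y_2$. The arc-locally in-semicomplete property makes $y_{-1}$ and $y_2$ adjacent, and since $K$ is initial and $y_2 \notin V(K)$ this edge must be $y_{-1} \to y_2$; then $y_{-1}y_2y_3\cdots y_m$ is again a strictly shorter path, a contradiction. This proves the claim.

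Applying the claim with $i = m-1$ gives $y_{m-1} \to y_m$; since $y_m \in V(K_2)$ and $y_{m-1} \notin V(K_2)$, this is an edge entering $K_2$ from outside, contradicting that $K_2$ is initial. Hence no initial strong component of $D$ can be non-trivial.

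I expect the one delicate point to be the case $i = 1$ of the induction: it is the only place where non-triviality of $K$ is genuinely used (to produce the in-neighbour $y_{-1}$ inside $K$), and one must verify that $y_{-1}$ is a new vertex so that the forbidden pattern $H_1$ really involves four distinct vertices, and that the adjacency it yields is forced to point out of $K$. The rest is the routine "a chord shortens a shortest path" mechanism, but careful bookkeeping of distinctness in each application of the arc-locally in-semicomplete condition is where attention is needed.
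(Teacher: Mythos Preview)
The paper does not supply its own proof of this lemma: it is quoted as an auxiliary result from Wang and Wang~\cite{wang2011independent} and used as a black box. So there is no in-paper argument to compare against.

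That said, your proposed proof is correct and self-contained. The shortest-path-in-$U(D)$ set-up together with the inductive orientation argument is exactly the right mechanism, and you have handled the only genuinely delicate spot, the case $i=1$, properly: non-triviality of $K$ is used precisely there to produce the extra in-neighbour $y_{-1}\in V(K)$, the four vertices $y_{-1},y_0,y_1,y_2$ are indeed distinct (since $y_1,y_2\notin V(K)$ by minimality of $P$), and the resulting adjacency is forced to be $y_{-1}\to y_2$ because $K$ is initial. The shortcut paths you build in both the $i=1$ and $i\ge 2$ cases are valid paths in $U(D)$ of strictly smaller length, so the contradiction goes through. The final step $y_{m-1}\to y_m$ with $y_{m-1}\notin V(K_2)$ contradicts that $K_2$ is initial, completing the argument.
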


\begin{lemma}
\label{lem-arc-v1-v2}
Let $Q$ be a strong component that induces an odd extended cycle of length at least five of an arc-locally in-semicomplete digraph $D$. If $Q$ is an initial strong component of $D$, then $V(D)$ admits a partition $(V_1, V_2)$, such that $V_1 \Rightarrow V_2$, $D[V_1] = V(Q)$, $D[V_2]$ is a bipartite digraph and $V_2$ can be empty.
\end{lemma}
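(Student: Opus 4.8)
The plan is to take $V_1 := V(Q)$ and $V_2 := V(D)\setminus V(Q)$. Since $Q$ is an initial strong component, no edge enters $V(Q)$ from outside, so $V_1 \Rightarrow V_2$ holds, $D[V_1]=D[V(Q)]=Q$, and $V_2$ is empty exactly when $D=Q$; hence the only real task is to show that $D[V_2]$ is a bipartite digraph. First I would record that $Q$ is the \emph{unique} initial strong component of $D$: an odd extended cycle of length at least five is non-trivial, so a second initial strong component would force, by Lemma~\ref{arc-in-lema-inicial}, all initial strong components to be trivial, a contradiction. Consequently (as $D$ is connected) every strong component of $D$ is reached by $Q$. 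Write $Q := Q[X_1,\dots,X_{2k+1}]$ with $k \ge 2$.

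\emph{Step 1: $D[V_2]$ is acyclic.} If some strong component $K\ne Q$ were non-trivial, then, $K$ being reached by $Q$ and $Q$ being strong, every vertex of $Q$ has a path to $K$, so Lemma~\ref{arc-in-lema_2} gives an edge from $Q$ into $K$; since $D[V(Q)]$ is non-bipartite, Lemma~\ref{arc-in-lema_4} then gives $Q \mapsto K$, in particular $Q \to K$. Picking an edge $uv$ of $K$ and vertices $x_1\in X_1$, $x_3\in X_3$, the relations $x_1\to u$, $x_3\to v$ with $u,v$ adjacent force $x_1$ and $x_3$ to be adjacent, contradicting that the extended cycle has at least five parts. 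So every strong component contained in $V_2$ is trivial; in particular $D[V_2]$ is acyclic (hence has no directed cycle of any length).

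\emph{Step 2: $D[V_2]$ has no transitive triangle.} Suppose it has one, say $a\to b$, $b\to c$, $a\to c$ with $a,b,c\in V_2$; I would backtrack along the ``source''. The basic move: given a transitive triangle $s\to m$, $m\to t$, $s\to t$ and an in-neighbour $p$ of $s$, applying the arc-locally in-semicomplete condition to the adjacent pair $s,t$ together with $p\to s$ and $m\to t$ yields that $p$ and $m$ are adjacent; the edge $m\to p$ is excluded, because $m\to p\to s\to m$ would be a directed triangle, which is impossible inside $V_2$ by Step~1 and impossible when $p\in V(Q)$ since $Q$ is initial; hence $p\to m$ and $\{p,s,m\}$ is again a transitive triangle (distinctness of the four vertices at each such move follows from Step~1 and the initiality of $Q$). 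Every source $s\in V_2$ has an in-neighbour, since $\{s\}$ cannot be an initial strong component; and as long as the sources stay in $V_2$ they trace a directed path in $D[V_2]$ with no repeated vertex, so by finiteness the source must eventually land in $V(Q)$. We thus reach a transitive triangle $q\to e$, $e\to f$, $q\to f$ with $q\in V(Q)$ and $e,f\in V_2$; say $q\in X_i$. One more move, using the in-neighbour $x\in X_{i-1}$ of $q$, shows $x$ and $e$ adjacent, hence $x\to e$ (since $Q$ is initial); then $\{x,q,e\}$ is a transitive triangle, and a further move with an in-neighbour $x''\in X_{i-2}$ of $x$ shows $x''$ and $q$ adjacent. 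But $X_{i-2}$ and $X_i$ are non-consecutive parts of the extended cycle (as $2k+1\ge 5$), so $x''$ and $q$ are non-adjacent in $D[V(Q)]$, hence in $D$ — a contradiction.

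\emph{Step 3: conclusion.} If $U(D[V_2])$ contained an odd cycle, a shortest one would be chordless, hence induce in $D[V_2]$ a digraph whose underlying graph is exactly that cycle; it is not a directed cycle by Step~1, so it is an induced non-oriented odd cycle of $D$ — of length three it is a transitive triangle, contradicting Step~2, and of length at least five it contradicts Lemma~\ref{arc-in-circuito}. Hence $D[V_2]$ is bipartite, and $(V_1,V_2)$ is the desired partition. The main obstacle is Step~2: one must set up the backtracking so that the reverse edge is excluded at every stage (using acyclicity of $D[V_2]$ while the source is in $V_2$, and initiality of $Q$ once the source enters $V(Q)$), so that finiteness genuinely drives the source into $V(Q)$, and one must carry out the routine but necessary distinctness checks on the four vertices in each invocation of the arc-locally in-semicomplete condition.
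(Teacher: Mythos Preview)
Your proof is correct and follows essentially the same approach as the paper's: both set $V_1=V(Q)$, use Lemma~\ref{arc-in-lema-inicial} to get uniqueness of the initial component, show $D[V_2]$ is acyclic via Lemmas~\ref{arc-in-lema_2} and~\ref{arc-in-lema_4} (the paper cites Lemma~\ref{lem-arc-cir-est}(i), which is the same argument), and then rule out transitive triangles by repeatedly pulling one back toward $Q$ with the arc-locally in-semicomplete condition until two consecutive $Q$-vertices appear, contradicting the non-adjacency of $X_{i-2}$ and $X_i$. The only cosmetic difference is that the paper organizes the pull-back as a ``minimum distance to $Q$'' argument along a shortest path (its Claims~1 and~2), whereas you iterate explicitly on in-neighbours of the source; the underlying moves are identical.
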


\begin{proof}

Let $Q:=Q[X_1,X_2,...,X_{2k+1}]$. If $V(D)=V(Q)$, then the result follows by taking the partition $(V(Q), \emptyset)$. So we may assume that $D-V(Q)$ is nonempty. In particular, $D$ is non-strong. By Lemma~\ref{arc-in-lema-inicial}, $Q$ is the only initial strong component of $D$. Consider the set $V_2 = \cup_ {K \in \K^+(Q)} V(K)$. Note that $V(D) = V(Q) \cup V_2$ and $V(Q) \Rightarrow V_2$. Now, we show that $D[V_2]$ is a bipartite digraph. By Lemma~\ref{lem-arc-cir-est}(i), every vertex of $V_2$ is a trivial strong component and hence, $D[V_2]$ is an acyclic digraph. Recall that since $D$ is arc-locally in-semicomplete, if $u_1,u_2,u_3$ and $u_4$ are distinct vertices such that $u_2$ and $u_3$ are adjacent, $u_1 \to u_2$ and $u_4 \to u_3$, then $u_1$ and $u_4$ must be adjacent in $D$.

\textbf{Claim 1.} If a vertex $u$ dominates a vertex $v$ of a transitive triangle $T$, then $u$ is adjacent to a vertex $w$ distinct of $v$ in $V(T)$ such that $D[\{u, v, w \}]$ is a transitive triangle. In particular, if $u \in V(Q)$, then $u$ dominates both $v$ and $w$, because $V(Q) \Rightarrow V_2$. In fact, let $V(T) = \{y_1, y_2, y_3\}$. Assume that $u \to y_1$. If $y_2 \to y_3$ (resp., $y_3 \to y_2$), then $u$ and $y_2$ (resp., $y_3$) are adjacent. Therefore, $D[\{u, y_1, y_2 \}]$ (resp., $D[\{u, y_1, y_3\}]$) is a transitive triangle, since $D[V_2]$ is an acyclic digraph. This proves the claim.

\textbf{Claim 2.} There are no index $i \in \{1,2,\ldots,2k+1\}$, vertices $x_{i-1} \in X_{i-1}$, $x_i \in X_i$ and $y \in V_2$ such that $D[\{x_{i-1},x_i,y\}]$ is a transitive triangle (thus,  $x_{i-1} \to \{x_i,y\}$ and $x_i \to y\}$). In fact, let $x_{i-2} \in X_{i-2}$. Since $x_{i-2} \to x_{i-1}$, $x_i \to y$ and $x_{i-1}y$ is an edge, we conclude that $x_{i-2}$ and $x_i$ must be adjacent, a contradiction. This proves the claim.

By Lemma~\ref{arc-in-circuito}, it follows that $D$(and hence, $D[V_2]$) contains no induced non-oriented odd cycle of length at least five. Therefore, it suffices to show that $D[V_2]$ is transitive triangle free. Towards a contradiction, assume that $D[V_2]$ contains a transitive triangle. Let $T$ be a transitive triangle of $D[V_2]$ such that $dist(Q,T)$ is minimum. Assume that $V(T) = \{y_1, y_2, y_3\}$. Let $P = w_1w_2w_l ... w_{l + 1}$ be a minimum path from $Q$ to $T$. Without loss of generality, assume that $w_{l + 1} = y_1$. First, assume that $l>1$. By Claim 1, the digraph $D[\{w_l, y_1, y_j\}]$ is a transitive triangle, for some $j \in \{2,3\}$, which contradicts the choice of $T$. Thus, it follows that $l = 1$, that is, there is an edge from $Q$ to $T$. Let $x_i \in X_i$ be a vertex of $Q$ that dominates $y_1$ in $V(T)$. By Claim 1, it follows that $D[\{x_i, y_1, y_j\}]$ is a transitive triangle, for some $j \in \{2,3\}$. Let $x_{i-1} \in X_{i-1}$. By definition of extended cycle, $x_{i-1} \to x_i$. If $y_1 \to y_j$ (resp., $y_j \to y_1$), then since $x_i \to \{y_1,y_j\}$ and $x_{i-1} \to x_i$, it follows that $x_{i-1} \to y_1$ (resp., $x_{i-1} \to y_j$), a contradiction by Claim 2. Therefore, $D[V_2]$ is a bipartite digraph.

Since $Q$ is the only initial strong component of $D$ and $V(D) = V(Q) \cup V_2$, it follows that $V(D)$ can be partition into $(V(Q), V_2)$ such that $V(Q) \Rightarrow V_2$ and $D[V_2]$ is a bipartite digraph. This ends the proof.
\qed 
\end{proof}

The next lemma states that if an arc-locally in-semicomplete digraph $D$ contains a non-initial strong component $Q$ that induces an odd extended cycle of length at least five, then $V(D)$ admits a similar partition to the previous lemma or a clique cut.

\begin{lemma}
\label{lem-arc-digrafo-D}
Let $D$ be a connected non-strong arc-locally in-semicomplete digraph and let $Q$ be a non-initial strong component of $D$ that induces an odd extended cycle of length at least five. Then, $D$ has a clique cut or $V(D)$ admits a partition $(V_1, V(Q), V_3)$, such that $D[V_1]$ is a semicomplete digraph, $V_1 \mapsto V(Q)$, $V_1 \Rightarrow V_3$, $V(Q) \Rightarrow V_3$ and $D[V_3]$ is a bipartite digraph($V_3$ could be empty).
\end{lemma}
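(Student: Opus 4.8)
The plan is to delete $W:=\bigcup_{K\in\K^-(Q)}V(K)$ from $D$ and reduce to the situation already settled in Lemma~\ref{lem-arc-v1-v2}. First I would record the reachability facts needed. By Lemma~\ref{lem-arc-cir-est}, $D[W]$ is semicomplete and $W\mapsto V(Q)$. Moreover, if $v\in V(D)\setminus V(Q)$ dominates a vertex of $V(Q)$, then $v$ reaches $Q$, so $v$ belongs to a component of $\K^-(Q)$, that is, $v\in W$; hence the external in-neighbours of $V(Q)$ are exactly the vertices of $W$. The same reachability argument shows there is no edge of $D$ from $V(D)\setminus(W\cup V(Q))$ into $W\cup V(Q)$ (the tail of such an edge would reach $Q$, and so would lie in $W\cup V(Q)$). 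Finally, $V(Q)$, $W$, $\bigcup_{K\in\K^+(Q)}V(K)$ and the set $S$ of remaining vertices are pairwise disjoint and cover $V(D)$.

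Next I would pass to $D':=D-W$. It is an induced subdigraph of $D$, hence arc-locally in-semicomplete. Since $Q$ is disjoint from $W$ it is still a strong component of $D'$, and since deleting $W$ removes precisely the external in-neighbours of $V(Q)$, the component $Q$ is now an \emph{initial} strong component of $D'$; of course it still induces the same odd extended cycle of length at least five. I would then split into two cases.

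If $D'$ is disconnected, then $D-W$ is disconnected with $W\neq\emptyset$ (as $Q$ is non-initial in $D$) and $V(D)\setminus W\neq\emptyset$, so $W$ is a vertex cut; as $D[W]$ is semicomplete, $W$ is a clique cut and the first alternative holds. If $D'$ is connected, I would apply Lemma~\ref{lem-arc-v1-v2} to $D'$ with its initial strong component $Q$, obtaining a partition $(V(Q),V_2)$ of $V(D')=V(D)\setminus W$ with $V(Q)\Rightarrow V_2$ in $D'$ and $D'[V_2]$ bipartite. Taking $V_1:=W$ and $V_3:=V_2$, the partition $(V_1,V(Q),V_3)$ has all the required properties: $D[V_1]$ is semicomplete and $V_1\mapsto V(Q)$ by Lemma~\ref{lem-arc-cir-est}; $V_1\Rightarrow V_3$ and $V(Q)\Rightarrow V_3$ because no edge of $D$ runs from $V_3$ to $W\cup V(Q)$ (by the observation above and $V(Q)\Rightarrow V_2$ in $D'$); $D[V_3]=D'[V_2]$ is bipartite; and $V_3$ may be empty.

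The point demanding the most care is the reduction itself: showing that after deleting $W$ the component $Q$ really is an initial strong component of the connected digraph $D'$, so that Lemma~\ref{lem-arc-v1-v2} (whose proof passes through Lemma~\ref{arc-in-lema-inicial} and so needs the ambient digraph connected) applies to $D'$ verbatim. After that, the argument is routine reachability bookkeeping and unwinding of the relations $\mapsto$ and $\Rightarrow$.
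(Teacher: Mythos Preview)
Your proof is correct and follows the same strategy as the paper: set $V_1=W$, delete $W$ so that $Q$ becomes an initial strong component of the remainder, invoke Lemma~\ref{lem-arc-v1-v2} there, and use $W$ as a clique cut when the deletion disconnects $D$. The paper phrases its case split as $V(D)=V_1\cup V(Q)\cup\bigcup_{K\in\K^+(Q)}V(K)$ versus not, and in the latter case argues directly (via the arc-locally in-semicomplete property) that the leftover vertices are non-adjacent to $V(Q)\cup V_3$; your connectedness criterion for $D-W$ reaches the same conclusion while sidestepping that computation.
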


\begin{proof}
Consider the sets $V_1 = \cup_{K \in \K^-(Q)}V(K)$ and $V_3 = \cup_{K \in \K^+(Q)}V(K)$. Note that only $V_3$ can be empty. By Lemma~\ref{lem-arc-cir-est}(iii), it follows that $D[V_1]$ is a semicomplete digraph. By Lemma~\ref{lem-arc-cir-est}(iv), there exists only one initial strong component $K$ that dominates $Q$ in $D$. Note that $V(K) \subseteq V_1$. Consider the vertex set $B = \{V_1 \cup V(Q) \cup V_3\}$. We split the proof in two cases, depending on whether $V(D)= B$ or not.

\textbf{Case 1.} Assume that $V(D)=B$. Consider the digraph $H=D-V_1$. Note that $V(H)= V(Q) \cup V_3$ and $Q$ is the unique initial strong component of $H$. By Lemma~\ref{lem-arc-v1-v2} applied to $H$, it follows that $V(Q) \Rightarrow V_3$ and $D[V_3]$ is a bipartite digraph. By Lemma~\ref{lem-arc-cir-est}(ii), it follows that $V_1 \mapsto V(Q)$. By definition of $\K^-(Q)$ and $\K^+(Q)$, we conclude that $V_1 \Rightarrow V_3$. Therefore, $(V_1, V(Q), V_3)$ is the desired partition of $V(D)$.

\textbf{Case 2.} Assume that $B$ is a proper subset of $V(D)$. In this case, we show that $V_1$ is a clique cut of $D$. First, we show that there exists no vertex $v$ in $V(D)-B$ adjacent to $V(Q) \cup V_3$. Since $v \notin B$, vertex $v$ does not dominate and it is not dominated by any vertex in $V(Q)$, nor it is dominated by any vertex in $V_3$. Thus, it suffices to show that $v$ does not dominate any vertex of $V_3$. Towards a contradiction, assume that there exists $v \in V(D)-B$ that dominates a vertex $u$ of $V_3$. Choose $u$ such that $dist(Q,u)$ is minimum. Let $P = w_1w_2w_l \ldots u $ be a minimum path from $Q$ to $u$. Assume that $l>1$. Note that $w_1 \in V(Q)$ and $w_2,...,w_l, u \in V_3$. Since $v \to u$, $w_l \to u$, $w_ {l-1} \to w_l$ and $D$ is arc-locally in-semicomplete, it follows that $v \to w_{l-1}$, which contradicts the choice of $u$. Thus, we may assume that $w_1 \to u$ and $ v \to u$. Let $z$ be a vertex of $V(Q)$ that dominates $w_1$. Since $D$ is arc-locally in-semicomplete, $w_1 \to u$, $v \to u$ and $z \to w_1$, it follows that $z$ and $v$ are adjacent, a contradiction to the fact that $v \notin B$. Since $D$ is connected, $B$ is a proper subset of $D$, $D[V_1]$ is a semicomplete digraph and there exists no vertex in $V(D)-B$ adjacent to $V(Q) \cup V_3$, we conclude that $V_1$ is a clique cut. This finishes the proof.
\qed
\end{proof}

For the main result of this section, we need the following auxiliary result.

\begin{lemma}[Wang and Wang~\cite{wang2009structure}, 2009]
\label{arc-cycle-extended-cycle}
Let $D$ be a strong arc-locally in-semicomplete digraph. If $D$ contains an induced cycle of length at least five, then $D$ is an extended cycle.
\end{lemma}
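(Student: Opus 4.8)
The plan is to use the induced cycle $C = v_1 v_2 \cdots v_\ell v_1$, with $\ell \geq 5$ and all subscripts read modulo $\ell$, as a coordinate system on $V(D)$ and to produce stable sets $X_1, \ldots, X_\ell$ with $v_i \in X_i$, $V(D) = X_1 \cup \cdots \cup X_\ell$, and every edge of $D$ running from some $X_i$ to $X_{i+1}$ --- which is precisely the statement $D = C[X_1, \ldots, X_\ell]$. If $V(C) = V(D)$ there is nothing to prove, so I would assume $V(C) \subsetneq V(D)$; then, as $D$ is strong, $C$ is a non-trivial strong subdigraph reached by every vertex of $D$, and Lemma~\ref{arc-in-lema_2} already gives that every vertex outside $V(C)$ is adjacent to $C$. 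The single recurring tool is the defining property of an arc-locally in-semicomplete digraph (if $x,y,u,v$ are distinct, $u$ and $v$ adjacent, $x\to u$ and $y\to v$, then $x\sim y$) applied to arcs of $C$: from $v_{i-2} \to v_{i-1} \to v_i$ and any arc $z \to v_i$ with $z \neq v_{i-1}$ one deduces $z$ is adjacent to $v_{i-2}$, and since $\ell \geq 5$ and $C$ is induced, $v_{i-2}$ and $v_i$ are non-adjacent, so such a $z$ must lie outside $V(C)$; I will refer to this and its obvious variants as the \emph{cycle rule}.

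The technical core --- and the step I expect to be the main obstacle --- is an \emph{attachment lemma}: for every $u \in V(D) - V(C)$ there is an index $a$ with $v_a \to u \to v_{a+2}$, and $u$ is adjacent to no vertex of $C$ other than $v_a$ and $v_{a+2}$. I would prove this by a careful case analysis combining strong connectivity with repeated use of the cycle rule. For the crucial point that $u$ dominates at least one vertex of $C$, I would take $u$ dominating no vertex of $C$ with $dist(u,C)$ minimum and a shortest path $u = u_0 \to u_1 \to \cdots \to u_t = v_j$; minimality forces $t = 2$ (for $t \geq 3$ the vertex $u_1$ would dominate no vertex of $C$ either and be strictly closer), so $u \to u_1 \to v_j$, and applying the defining property to $u \to u_1 \to v_j \leftarrow v_{j-1}$ yields $u \sim v_{j-1}$, hence $v_{j-1} \to u$; using the cycle rule on $u_1 \to v_j$ together with one further application of the defining property gives $u \sim v_{j-3}$, hence $v_{j-3} \to u$, after which the defining property on $v_{j-4} \to v_{j-3} \to u \leftarrow v_{j-1}$ makes $v_{j-4}$ and $v_{j-1}$ adjacent, contradicting that $C$ is induced (here $\ell \geq 5$ is essential). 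With that in hand, the cycle rule applied to $u \to v_p$ gives $u \sim v_{p-2}$, and short further chases rule out a second cycle vertex dominated by $u$, rule out digons between $u$ and $C$, and rule out any other cycle-neighbour of $u$ --- in each case the alternative forces two vertices of $C$ at distance $2$ or $3$ to be adjacent. The genuinely delicate part is the bookkeeping: one must check in every application of the defining property that the four vertices involved are pairwise distinct --- this is exactly where $\ell \geq 5$ and the inducedness of $C$ are used --- and separately treat the boundary cases in which a vertex named in the argument happens to lie on $C$.

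Once the attachment lemma is available, I would set $X_i := \{\, u \in V(D) : u \to v_{i+1} \,\}$. The lemma gives $X_i \cap V(C) = \{v_i\}$, pairwise disjointness of the $X_i$, the fact that they cover $V(D)$, and that a vertex of $X_i$ is dominated on $C$ exactly by $v_{i-1}$. Stability of $X_i$ follows because $u \to u'$ with $u, u' \in X_i$ forces $u \sim v_i$ via the defining property on $u \to u' \to v_{i+1} \leftarrow v_i$ (with the case $u = v_i$ dispatched by pushing the argument back one more step to force the impossible adjacency $v_{i-2} \sim v_i$), contradicting $u \in X_i$. That $X_i \to X_{i+1}$ holds because, for $u \in X_i$ and $w \in X_{i+1}$, the defining property on $u \to v_{i+1} \to v_{i+2} \leftarrow w$ gives $u \sim w$, while $w \to u$ would, together with $v_{i-2} \to v_{i-1} \to u$, force $v_{i-2}$ adjacent to $w$ --- impossible, since $w$ is adjacent on $C$ only to $v_i$ and $v_{i+2}$ and $\ell \geq 5$. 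Finally, an edge between $X_i$ and $X_j$ with $j \not\equiv i \pm 1$, say $u \to w$, would force $v_{i-1}$ adjacent to $v_{j-1}$ (apply the defining property to the arc $u \to w$ with $v_{i-1} \to u$ and $v_{j-1} \to w$), which is again impossible. Hence $D = C[X_1, \ldots, X_\ell]$ is an extended cycle, as claimed.
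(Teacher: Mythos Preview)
The paper does not actually prove this lemma: it is quoted verbatim from Wang and Wang~\cite{wang2009structure} and used as a black box, so there is no ``paper's own proof'' to compare against. What I can do is assess your argument on its own merits.

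Your overall architecture is the right one and is essentially what the original Wang--Wang proof does: anchor everything to the induced cycle $C$, prove an attachment lemma describing exactly how each $u\in V(D)\setminus V(C)$ meets $C$, and then read off the extended-cycle structure by setting $X_i=\{u:u\to v_{i+1}\}$. The final assembly you give (stability of the $X_i$, $X_i\to X_{i+1}$, and the exclusion of edges between non-consecutive parts) is correct once the attachment lemma is in hand; your distinctness checks and the use of $\ell\geq 5$ are accurate there.

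The one place where your write-up is thinner than it should be is the attachment lemma itself, specifically the clause that $u$ dominates \emph{exactly one} vertex of $C$ and has no digon with $C$. Your phrase ``short further chases'' hides a genuine case analysis. From $u\to v_p$ the cycle rule gives only $u\sim v_{p-2}$; if in fact $u\to v_{p-2}$, iterating yields $u\to v_{p-2k}$, and when $\ell$ is even this does not by itself sweep out all of $C$, so you cannot immediately produce a forbidden chord. The clean way through is to bring in an in-neighbour $w$ of $u$ (which exists since $D$ is strong): applying the defining property to $w\to u$ together with $v_{j-1}\to v_j$ for each $j$ with $u\to v_j$ forces $w$ to be adjacent to every $v_{j-1}$, and if $u$ dominates two cycle vertices whose indices differ by more than~$2$ this already forces $w$ (whether on $C$ or, after one more step, off $C$) into an impossible adjacency pattern on $C$. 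The residual case $u\to v_p$, $u\to v_{p-2}$ (possibly with a digon at $v_{p-2}$) then falls to one more application of the same idea, again using $\ell\geq 5$. None of this is hard, but it is more than a one-liner, and as written your sketch does not make clear that you have seen where the work lies. Filling in that paragraph would make the proof complete.
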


\begin{theorem}
\label{lem-arc-resultado}
Let $D$ be a connected arc-locally in-semicomplete digraph. Then,
\begin{enumerate}
	\item[\rm{i}.] $D$ is a diperfect digraph, or

	\item[\rm{ii}.] $V(D)$ can be partitioned into $(V_1, V_2, V_3)$ such that $D[V_1]$ is a semicomplete digraph, $V_1 \mapsto V_2$, $V_1 \Rightarrow V_3$, $D[V_2]$ is an odd extended cycle of length at least five, $V_2 \Rightarrow V_3$, $D[V_3]$ is a bipartite digraph and $V_1$ or $V_3$ (or both) can be empty, or 
	
	\item[\rm{iii}.] $D$ has a clique cut.
\end{enumerate} 
\end{theorem}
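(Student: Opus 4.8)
The plan is to dispatch the theorem by a clean case analysis on whether $D$ contains an induced odd (possibly non-oriented) cycle of length at least five, using the lemmas already established. First I would observe that by Lemma~\ref{arc-in-circuito}, any induced non-oriented odd cycle of length at least five is impossible, so the only obstruction to $D$ being diperfect (via Lemma~\ref{arc-no-cc-perf}) is an induced \emph{oriented} odd cycle $C$ of length at least five. If $D$ has no such induced odd cycle, then by Lemma~\ref{arc-no-cc-perf} $D$ is diperfect and conclusion (i) holds; so we may assume $D$ contains an induced odd cycle $C$ with $|V(C)|\ge 5$.

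Next I would locate this cycle within the strong component structure. Let $Q$ be the strong component of $D$ containing $V(C)$. Since $C$ is a cycle, $Q$ is non-trivial; moreover $D[V(Q)]$ is non-bipartite. If $D$ is strong, then $D=Q$ and Lemma~\ref{arc-cycle-extended-cycle} forces $D$ to be an extended cycle; since it contains an induced odd cycle of length at least five, $D$ itself is an odd extended cycle of length at least five, and conclusion (ii) holds with $V_1=V_3=\emptyset$, $V_2=V(D)$. (I should double-check here that an extended cycle $C[X_1,\dots,X_m]$ that contains an induced odd cycle of length $\ge 5$ must in fact have odd $m\ge 5$ — this is immediate since an induced cycle in an extended cycle uses exactly one vertex from each $X_i$.) So assume $D$ is non-strong. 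The key sub-case split is then whether $Q$ is an initial strong component or not.

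If $Q$ is initial, apply Lemma~\ref{lem-arc-v1-v2}: $V(D)$ partitions as $(V(Q),V_2')$ with $V(Q)\Rightarrow V_2'$, $D[V_2']$ bipartite, $V_2'$ possibly empty. Then conclusion (ii) holds by setting $V_1=\emptyset$, $V_2=V(Q)$, $V_3=V_2'$ (all the domination/non-edge conditions involving the empty $V_1$ are vacuous). If $Q$ is non-initial, apply Lemma~\ref{lem-arc-digrafo-D}: either $D$ has a clique cut, giving conclusion (iii), or $V(D)$ partitions as $(V_1,V(Q),V_3)$ with $D[V_1]$ semicomplete, $V_1\mapsto V(Q)$, $V_1\Rightarrow V_3$, $V(Q)\Rightarrow V_3$, $D[V_3]$ bipartite, and $V_3$ possibly empty — which is exactly conclusion (ii) with $V_2=V(Q)$. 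In every case one of (i), (ii), (iii) holds, completing the proof.

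The main obstacle, such as it is, is not a hard argument but rather making sure the hypotheses of the cited lemmas are all genuinely met and that the bookkeeping of the partition labels is airtight: in particular verifying that the strong component containing an induced odd cycle of length $\ge 5$ does induce an \emph{odd extended cycle of length at least five} (and not merely ``contains'' one) whenever we are not in the diperfect case — this requires combining Lemma~\ref{arc-cycle-extended-cycle} (which says the strong component is an extended cycle) with the elementary observation about induced cycles in extended cycles to pin down that the length is odd and at least five. Everything else is a matter of routing the cases through Lemmas~\ref{arc-in-circuito}, \ref{arc-no-cc-perf}, \ref{lem-arc-v1-v2}, and \ref{lem-arc-digrafo-D}, and checking that the degenerate empty-set cases of the partition in (ii) are consistent.
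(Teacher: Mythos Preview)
Your proposal is correct and follows essentially the same route as the paper's proof: reduce to the existence of an induced odd cycle of length at least five via Lemma~\ref{arc-no-cc-perf}, pass to the strong component $Q$ containing it, invoke Lemma~\ref{arc-cycle-extended-cycle} to identify $Q$ as an odd extended cycle of length at least five, and then branch on whether $Q$ is initial (Lemma~\ref{lem-arc-v1-v2}) or non-initial (Lemma~\ref{lem-arc-digrafo-D}). Your extra care in justifying that the extended cycle must have odd length at least five, and your separate treatment of the strong case, are sound refinements; the paper absorbs the strong case into Lemma~\ref{lem-arc-v1-v2} (which allows $V(D)=V(Q)$) and asserts the odd-length-at-least-five conclusion directly from Lemma~\ref{arc-cycle-extended-cycle} without spelling out the induced-cycle argument you give.
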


\begin{proof}
If $D$ contains no induced odd cycle of length at least five, then by Lemma~\ref{arc-no-cc-perf} the digraph $D$ is diperfect. Thus, let $C$ be an induced odd cycle of length at least five. Let $Q$ be the strong component that contains $C$. By Lemma~\ref{arc-cycle-extended-cycle}, the strong component $Q$ induces an odd extended cycle of length at least five. Now, we have two cases to deal with, depending on whether $Q$ is an initial strong component or not. First, assume that $Q$ is an initial strong component of $D$. By Lemma~\ref {lem-arc-v1-v2}, the set $V(D)$ admits a partition $(V_1, V(Q), V_3)$ such that $V_1$ is empty, $V(Q) \Rightarrow V_3$ and $D[V_3]$ is a bipartite digraph, and so (ii) holds. Now, assume that $Q$ is not an initial strong component of $D$. By Lemma~\ref{lem-arc-digrafo-D}, it follows that $D$ contains a clique cut and hence (iii) holds, or $V(D)$ can be partitioned into $(V_1, V(Q), V_3)$ such that $D[V_1]$ is a semicomplete digraph, $V_1 \mapsto V(Q)$, $V_1 \Rightarrow V_3$, $V(Q) \Rightarrow V_3$ and $D[V_3]$ is a bipartite digraph and hence (ii) holds. This ends the proof.
\qed
\end{proof}

Note that the inverse of an arc-locally in-semicomplete digraph is an arc-locally out-semicomplete digraph. Thus, we have the following result.

\begin{theorem}
\label{lem-arc-resultado}
Let $D$ be a connected arc-locally out-semicomplete digraph. Then,
\begin{enumerate}
	\item[\rm{i}.] $D$ is a diperfect digraph, or

	\item[\rm{ii}.] $V(D)$ can be partitioned into $(V_1, V_2, V_3)$ such that $D[V_1]$ is a semicomplete digraph, $V_2 \mapsto V_1$, $V_3 \Rightarrow V_1$, $D[V_2]$ is an odd extended cycle of length at least five, $V_3 \Rightarrow V_2$, $D[V_3]$ is a bipartite digraph and $V_1$ or $V_3$ (or both) can be empty, or 
	
	\item[\rm{iii}.] $D$ has a clique cut.
\qed
\end{enumerate} 
\end{theorem}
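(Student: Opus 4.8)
The plan is to deduce this theorem directly from the previous one by passing to the inverse digraph, as indicated in the sentence preceding the statement. First I would observe that if $D$ is a connected arc-locally out-semicomplete digraph, then its inverse digraph $D^{-}$ is a connected arc-locally in-semicomplete digraph: the underlying graphs satisfy $U(D^{-}) = U(D)$ (so connectedness is preserved), and reversing all edges turns every induced copy of $H_2$ into an induced copy of $H_1$ with the roles of $v_1$ and $v_4$ unchanged, so the orientedly $\{H_2\}$-free property of $D$ becomes the orientedly $\{H_1\}$-free property of $D^{-}$. Hence Theorem~\ref{lem-arc-resultado} (the arc-locally in-semicomplete version) applies to $D^{-}$, yielding one of the three conclusions (i), (ii), (iii) for $D^{-}$.

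Next I would translate each of the three outcomes back to $D$. Since $U(D^{-}) = U(D)$, the digraph $D^{-}$ is diperfect if and only if $D$ is, which handles case (i); likewise a subset inducing a semicomplete digraph in $D^{-}$ induces a semicomplete digraph in $D$, a bipartite subdigraph stays bipartite, and a clique cut of $D^{-}$ is a clique cut of $D$, which handles case (iii). For case (ii) I would record the elementary fact that the inverse of an extended cycle $C[X_1,\dots,X_k]$ is the extended cycle $C[X_1,X_k,X_{k-1},\dots,X_2]$ on the same stable sets, so in particular the inverse of an odd extended cycle of length at least five is again an odd extended cycle of length at least five; thus $D[V_2]$ has the required form. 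Finally I would check how the relations $\to$, $\Rightarrow$, $\mapsto$ dualize: by definition $X \mapsto_{D^{-}} Y$ holds precisely when $Y \mapsto_{D} X$, and $X \Rightarrow_{D^{-}} Y$ precisely when $Y \Rightarrow_{D} X$. Applying this to the partition $(V_1,V_2,V_3)$ furnished by Theorem~\ref{lem-arc-resultado} for $D^{-}$, the relations $V_1 \mapsto V_2$, $V_1 \Rightarrow V_3$, $V_2 \Rightarrow V_3$ in $D^{-}$ become exactly $V_2 \mapsto V_1$, $V_3 \Rightarrow V_1$, $V_3 \Rightarrow V_2$ in $D$, which is the statement of (ii) here; the side condition that $V_1$ or $V_3$ (or both) may be empty carries over verbatim.

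I do not expect a genuine obstacle: the entire argument is a bookkeeping exercise confirming that "connected", "arc-locally out/in-semicomplete", "diperfect", "semicomplete", "bipartite", "odd extended cycle of length at least five", and "clique cut" are all invariant under digraph inversion, while $\to$, $\Rightarrow$, $\mapsto$ are reversed. The only point worth stating explicitly, so that the reader is not left to verify it, is the description of the inverse of an extended cycle above; everything else is immediate from the definitions in Section~\ref{notation}.
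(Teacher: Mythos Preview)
Your proposal is correct and takes exactly the same approach as the paper: the paper simply remarks that the inverse of an arc-locally in-semicomplete digraph is arc-locally out-semicomplete and then states the theorem with no further proof. Your write-up supplies the routine verification that all the relevant properties are preserved or reversed under inversion, which is more detail than the paper gives but entirely in the same spirit.
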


\section{Arc-locally semicomplete digraphs}
\label{arc-semi}

Recall that, a digraph $D$ is arc-locally semicomplete if $D$ is both arc-locally in-semicomplete and arc-locally out-semicomplete. In~\cite{galeana12}, Galeana-Sánchez and Goldfeder presented a characterization of arbitrary connected arc-locally semicomplete digraphs. In order to present the result obtained by Galeana-Sánchez and Goldfeder, we need some definitions. 

Let $D$ be a digraph with $V(D) = \{v_0 , \ldots , v_n\}$ and let $H_0 , \ldots , H_n$ digraphs indexed by $V(D)$. The \emph{composition} $D[H_0 , \ldots , H_n]$ is the digraph $H$ with vertex set $V(H) = \cup_{i=0}^n V(H_i)$ and, $E(H)= \cup_{i=0}^n E(H_i) \cup \{uv: u \in V(H_i), v \in V(H_j) $ and $ v_iv_j \in E(D) \}$. If each $V(H_i)$ is a stable set, then we call $H$ an \emph{extension} of $D$. Let $X$ and $Y$ be two disjoint subsets of vertices of $D$. If $\sP = ( V_0 , \ldots, V_{k-1})$ is a fixed ordered $k$-partition of $V(D)$, we say that $X$ $\sP$-dominates $Y$ according to the ordered $k$-partition $\sP$ , $X \to^{\sP} Y$, if for each vertex $u$ in $X \cap V_k$ and each vertex $v$ in $Y \cap V_l$ , we have $u \to v$ whenever $k \neq l$ and there is no edge from $Y$ to $X$. Let $D_0 , \ldots , D_n$ be $k$-partite digraphs with fixed ordered $k$-partitions $\sP(D_i) = ( V_0^i , \ldots , V_{k-1}^i)$, the $\sP$-composition according to the ordered $k$-partition $\sP = (\cup_{i=0}^{n}(V_0^i \times \{ i \}) = V_0 , \ldots, \cup_{i=0}^{n}(V_{k-1}^i \times \{ i \}) = V_{k-1})$, denoted by $D[D_0, \ldots , D_n ]^{\sP}$ , is the digraph $H$ with vertex set $V(H) = \cup_{i=0}^{k-1}V i$ and, for $(w,i),(z,j)$ in $V(H)$ , the edge $(w,i) \to (z,j)$ is in $E(H)$ if $i = j$ and $w \to z$ in $D_i$ or $w \in V_k^i$, $z \in V_l^j$ with $k \neq l$ and $v_iv_j$ in $E(D)$. Moreover, $\sC$ denotes the digraph with vertex set $\{v_1, v_2 , v_3\}$ and edge set $\{v_1v_2, v_2v_3,v_3v_1\}$, $TT_3$ denotes the digraph with vertex set $\{ u_1, u_2 , u_3 \}$ and edge set $\{u_1u_2, u_2u_3, u_1u_3\}$ and we denote by $E_m$ the digraph with $m$ vertices and no edges.

\begin{theorem}[Galeana-Sánchez and Goldfeder~\cite{galeana12}, 2016]
Let $D$ be a connected digraph. Then, $D$ is an arc-locally semicomplete digraph if and only if $D$ is one of the following:
\begin{enumerate}
	\item a digraph with at most three vertices,
	\item a subdigraph of an extension of one edge,
	\item a semicomplete bipartite subdigraph of $\overrightarrow{P_2}[E_{m_0}, \overrightarrow{C_2}[ E_{m_1},E_{m_2}], E_{m_3}]^{\sP}$, $m_1 = 1$ and if $m_2 > 1$, then the partition is $\sP = ( E_{m_1} , E_{m_0} \cup E_{m_2} \cup E_{m_3})$,
	\item $\sC^{*}_3[E_1, E_n, E_1]$,
	\item $TT_3[E_1,E_n,E_1]$ ,
	\item an extension of a directed path or an extension of a directed cycle,
	\item $\overrightarrow{P_2}[ E_{m_0}, D', E_{m_2}]^{\sP} \leq D \leq TT_3 [ E_{m_0},D', E_{m_2}]^{\sP}$, where $D'$ is a semicomplete bipartite digraph (it could have no edges),
	\item $\overrightarrow{P_2}[E_1, D', E_1]$ , where $D'$ is a semicomplete digraph,	
	\item a semicomplete bipartite digraph, or
	\item a semicomplete digraph.
\end{enumerate}
\end{theorem}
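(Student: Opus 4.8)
Since $D$ is arc-locally semicomplete, it is both arc-locally in- and out-semicomplete, so every lemma of Section~\ref{arc-in-semi} and Theorem~\ref{lem-arc-resultado} applies to $D$, as do their mirror versions for arc-locally out-semicomplete digraphs (obtained by passing to the inverse digraph). The plan is a case analysis on the strong-component structure of $D$.

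First dispose of $|V(D)|\le 3$, which is item~(1), and then of the non-diperfect case. If $D$ has an induced odd cycle of length at least five, then by Lemma~\ref{arc-in-circuito} it is an oriented cycle, contained in a strong component $Q$ which by Lemma~\ref{arc-cycle-extended-cycle} induces an odd extended cycle $Q[X_1,\dots,X_{2k+1}]$ of length at least five. If some vertex $v$ reached $Q$, then by Lemmas~\ref{arc-in-lema_2} and~\ref{arc-in-lema_1} we would have $v\mapsto Q$; choosing $a\in X_1$, $b\in X_2$, $c\in X_3$ (so $b\to c$) yields $v\to a$, $v\to b$ and $b\to c$ with $a$ and $c$ non-adjacent, a copy of $H_2$ contradicting out-semicompleteness. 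Hence $Q$ is an initial strong component, and dually no vertex is reached by $Q$; so by Lemma~\ref{lem-arc-v1-v2} and its mirror $V(D)=V(Q)$, i.e.\ $D$ is an odd extended cycle of length at least five, which is item~(6). From now on $D$ is diperfect, i.e.\ by Lemma~\ref{arc-no-cc-perf} has no induced odd cycle of length at least five.

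If $D$ is strong, invoke the characterization of strong arc-locally semicomplete digraphs (Bang-Jensen~\cite{bang2004}, with the missing family supplied in~\cite{galeana2009} and~\cite{wang2009structure}): $D$ is semicomplete, semicomplete bipartite, an extended cycle, or the special blow-up family; diperfectness forces any extended cycle here to have length at most four or to be even, hence to be an extension of a directed cycle, and matching the remaining forms against the list gives items~(4),~(6),~(9),~(10). So assume $D$ is non-strong and diperfect. The core step is to determine how the strong components attach to one another. Using Lemmas~\ref{arc-in-lema_2} and~\ref{arc-in-lema_4} (and their mirrors) one shows that there is essentially a single ``central'' strong component $M$ (or, when $D$ is bipartite, a central $\overrightarrow{C_2}$-blow-up), that every other strong component is trivial, and that these trivial components split into a ``source'' stable set $A$ and a ``sink'' stable set $B$ with $A\Rightarrow M\Rightarrow B$; here $A$ is stable because two non-adjacent vertices $u,v$ of $A$ both reaching $M$ would, via an arc $xy$ of $M$ with $u\to x$ and $v\to y$, be forced adjacent by in-semicompleteness (dually for $B$), and by the strong case $D[M]$ is semicomplete or semicomplete bipartite. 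Finally the adjacency from $A$ to $B$ is all-or-nothing, producing $\overrightarrow{P_2}[E_{m_0},D',E_{m_2}]^{\sP}\le D\le TT_3[E_{m_0},D',E_{m_2}]^{\sP}$ with $D'=D[M]$ — items~(2),~(7),~(8) — while in the bipartite subcase, with the ordered partition following the bipartition of $D'$, one obtains item~(3) with its stated constraint.

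The main obstacle is this final structural step: organising the component analysis so that precisely the listed families occur, with the ordered-partition bookkeeping of items~(3) and~(7) pinned down — that is, proving the blow-up classes can be chosen so that the $A$--$M$--$B$ adjacency is governed by a single $\overrightarrow{P_2}$ or $TT_3$ pattern. By contrast the converse implication is routine: one checks that none of the base digraphs $\overrightarrow{P_2}$, $TT_3$, $\sC^{*}_3$, a short directed cycle, or a complete bipartite digraph contains a (not necessarily induced) copy of $H_1$ or $H_2$ with non-adjacent end vertices, and that this property is preserved under the extensions, compositions, and subdigraphs allowed in the statement.
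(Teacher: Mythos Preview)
The paper does not prove this theorem. It is quoted as a result of Galeana-S\'anchez and Goldfeder~\cite{galeana12} purely for context; immediately after stating it the authors write ``In this paper, we present another characterization for this class'' and then prove the much coarser Theorem~\ref{cara_arc_semi} (a connected arc-locally semicomplete digraph is either diperfect or an odd extended cycle of length at least five). So there is no proof in the paper to compare your proposal against.

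That said, your treatment of the non-diperfect case is essentially the paper's proof of Lemma~\ref{lem_arc_semi} and Theorem~\ref{cara_arc_semi}, so that part is fine. In the diperfect non-strong case, however, your outline contains a real inconsistency: you declare the source set $A$ to be stable and then, in the very next clause, argue that any two vertices of $A$ reaching a non-trivial $M$ are forced to be \emph{adjacent}. Those two claims contradict each other. The correct picture splits according to whether the central component $M$ is bipartite: when $M$ is non-bipartite semicomplete, the adjacency-forcing argument does apply and shows the source and sink sets have size exactly one (this is why item~(8) has $E_1$'s, not $E_{m}$'s); when $M$ is bipartite, the forcing only works across the two colour classes of $M$, which is precisely why the ordered partition $\sP$ appears in items~(3) and~(7). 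Your acknowledged ``main obstacle'' --- pinning down the $\sP$-bookkeeping so that only the listed families survive --- is where the actual content of Galeana-S\'anchez and Goldfeder's proof lies, and your sketch does not yet engage with it.
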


In this paper, we present another characterization for this class. We show that a connected arc-locally semicomplete digraph is either diperfect or an odd extended cycle of length at least five. Note that the inverse of an arc-locally semicomplete digraph is also an arc-locally semicomplete digraph. This principle of directional duality is very useful to fix an orientation for a given path or edge in a proof. Besides, every result valid for arc-locally in-semicomplete digraphs, also holds for arc-locally semicomplete digraphs, because they form a subclass of the former one.

The next lemma states if a connected arc-locally semicomplete digraph $D$ contains an induced odd extended cycle $Q$ of length at least five, then $V(D)=V(Q)$. 

\begin{lemma}
\label{lem_arc_semi}
Let $D$ be an connected arc-locally semicomplete digraph. If $D$ contains a strong component $Q$ that induces an odd extended cycle of length of at least five, then $V(D)=V(Q)$.
\end{lemma}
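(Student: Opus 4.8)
The plan is to use the structural results already established for arc-locally in-semicomplete digraphs (Lemmas~\ref{lem-arc-v1-v2} and \ref{lem-arc-digrafo-D}, or equivalently Theorem~\ref{lem-arc-resultado}) together with the fact that $D$ is also arc-locally out-semicomplete, in order to rule out every possibility except $V(D)=V(Q)$. Suppose for contradiction that $V(D)\neq V(Q)$; then $D$ is non-strong. We split according to whether $Q$ is an initial strong component.

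First I would treat the case where $Q$ is initial. By Lemma~\ref{lem-arc-v1-v2}, $V(D)$ partitions into $(V(Q),V_2)$ with $V(Q)\Rightarrow V_2$ and $V_2$ nonempty. Since $Q$ is the unique initial strong component and $V_2\neq\emptyset$, there is an edge from $V(Q)$ to $V_2$; pick $x_i\in X_i$ and $y\in V_2$ with $x_i\to y$. Now exploit out-semicompleteness: taking $x_{i+1}\in X_{i+1}$ and $x_{i+2}\in X_{i+2}$ (recall $Q=Q[X_1,\dots,X_{2k+1}]$ is an extended cycle), we have $x_i\to x_{i+1}$ and $x_i\to y$, so $x_{i+1}$ and $y$ must be adjacent; since $V(Q)\Rightarrow V_2$ we get $x_{i+1}\to y$. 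Iterating this argument around the cycle forces $x_j\to y$ for \emph{every} $x_j$ in a block preceding some block, and in particular we eventually obtain $x_{i-1}\to y$ for some $x_{i-1}\in X_{i-1}$ as well. Then $x_{i-1}\to x_i$, $x_{i-1}\to y$ and — using $x_i\to y$ — in-semicompleteness applied to the edge $x_i y$ together with the vertex $x_{i-2}\to x_{i-1}$ yields that $x_{i-2}$ and $x_i$ are adjacent, contradicting that $Q$ induces an extended cycle of length at least five (this is exactly Claim~2 in the proof of Lemma~\ref{lem-arc-v1-v2}). So the initial case is impossible.

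Next I would treat the case where $Q$ is non-initial. By Lemma~\ref{lem-arc-digrafo-D}, either $D$ has a clique cut $V_1$ separating $V(Q)\cup V_3$ from the rest, or $V(D)$ partitions as $(V_1,V(Q),V_3)$ with $D[V_1]$ semicomplete, $V_1\mapsto V(Q)$, $V_1\Rightarrow V_3$, $V(Q)\Rightarrow V_3$, $D[V_3]$ bipartite. In the partition case, $V_1$ is nonempty (since $Q$ is non-initial), so pick $w\in V_1$ and $x_1\in X_1$, $x_2\in X_2$ with $w\to x_1$ and $w\to x_2$ (using $V_1\to V(Q)$). But then $x_1$ and $x_2$ lie in consecutive blocks and are adjacent, which is consistent; instead I apply out-semicompleteness to the edge $x_1 x_2$: since $w\to x_1$ and, taking $x_0\in X_{2k+1}$ with $x_0\to x_1$, we need $w$ and $x_0$ adjacent — fine, $w\to x_0$. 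The cleaner contradiction: take $x_1,x_3$ in $X_1,X_3$; both are dominated by every vertex of $V_1$, so for $w\in V_1$, $w\to x_1$ and $w\to x_3$ forces (by out-semicompleteness applied as in Lemma~\ref{lem-arc-cir-est}(i), with the edge of $Q$ from $X_1$'s successor chain) that $x_1$ and $x_3$ are adjacent — a contradiction to $Q$ being an extended cycle of length $\geq 5$. (Concretely: pick $x_2\in X_2$, then $w\to x_1$, $x_1\to x_2$, so applying the out-semicomplete condition to these, and then bootstrapping, one reaches $x_1\sim x_3$.) For the clique-cut case, a clique cut in an arc-locally semicomplete digraph that separates the non-bipartite component $Q$ still must be handled: I would show that $V_1\mapsto V(Q)$ and $V_1$ semicomplete together with the arc-locally out-semicomplete property again force two non-consecutive blocks of $Q$ to be adjacent, eliminating this subcase as well.

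The main obstacle I anticipate is the bookkeeping in the clique-cut subcase: unlike the clean partition case, here there may be additional vertices outside $B=V_1\cup V(Q)\cup V_3$, and I must make sure the out-semicomplete condition, when combined with $V_1\to V(Q)$ and the cyclic block structure, genuinely produces an edge between non-consecutive blocks of $Q$ rather than merely an edge into or out of $Q$ that is already allowed. The key leverage is that $V_1$ is \emph{nonempty} (since $Q$ is non-initial there is some vertex dominating $Q$) and $V_1\to V(Q)$: a single vertex $w\in V_1$ with $w\to X_1$ and $w\to X_2$, fed through the arc-locally out-semicomplete property along the edges $X_1\mapsto X_2\mapsto\cdots$, propagates domination of $w$ onto all blocks and then in-semicompleteness collapses the cycle. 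Once both cases yield contradictions, we conclude $V(D)=V(Q)$.
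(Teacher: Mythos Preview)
Your overall strategy can be made to work, but as written it contains a concrete error and is far more involved than necessary. In Case~1 you claim that from $x_i\to x_{i+1}$ and $x_i\to y$ the arc-locally out-semicomplete property forces $x_{i+1}$ and $y$ to be adjacent. It does not: the out-semicomplete condition concerns the configuration $v_1\leftarrow v_2\to v_3\to v_4$ (the pattern $H_2$), so you need a further arc $x_{i+1}\to x_{i+2}$ before you can conclude anything, and what you then conclude is $y\sim x_{i+2}$, not $y\sim x_{i+1}$. Stepping by two around an odd cycle does eventually visit every block, so the argument can be repaired, but the repair should be made explicit. Your clique-cut subcase is also not a genuine obstacle: Lemma~\ref{lem-arc-cir-est}(ii) already gives $V_1\mapsto V(Q)$ regardless of whether Lemma~\ref{lem-arc-digrafo-D} outputs a partition or a clique cut, so the same one-line contradiction applies.

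The paper's proof avoids all of this case analysis by a single observation you did not use: the inverse of an arc-locally semicomplete digraph is again arc-locally semicomplete, so by directional duality one may assume without loss of generality that some $u\in V(D)\setminus V(Q)$ dominates a vertex of $Q$. Then Lemma~\ref{arc-in-lema_1} gives $u\mapsto Q$ directly (no need for Lemmas~\ref{lem-arc-v1-v2} or~\ref{lem-arc-digrafo-D}), and a single application of the out-semicomplete property to $u\to x_1$, $u\to x_2$, $x_2\to x_3$ yields $x_1\sim x_3$, contradicting that $Q$ is an extended cycle of length at least five. Your Case~2 partition argument essentially rediscovers this last step, but the detour through the structure lemmas, the initial/non-initial split, and the propagation in Case~1 are all unnecessary once directional duality is on the table.
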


\begin{proof}
Let $Q: = Q[X_1, X_2, \ldots, X_{2k + 1}]$ be a strong component $Q$ that induces an odd extended cycle of length at least five. We show that $V(Q) = V(D)$. Assume, without loss of generality, that there exists a vertex $u \in V(D)-V(Q)$ such that $u$ dominates some vertex of $Q$. Since $D[V(Q)]$ is a non-bipartite digraph, it follows by Lemma~\ref{arc-in-lema_1} that $u \mapsto Q$. Consider vertices $x_1 \in X_1 $, $ x_2 \in X_2 $ and $ x_3 \in X_3$. Since $u \to \{x_1, x_2 \}$, $ x_2 \to x_3 $ and $D$ is arc-locally semicomplete, it follows that $x_1$ and $x_3$ are adjacent, a contradiction to the fact that $Q$ induces an extended cycle. Since $D$ is connected, then $V(D)=V(Q)$. 
\qed
\end{proof}

Now, we are ready for the main result of this section.

\begin{theorem}
\label{cara_arc_semi}
Let $D$ be a connected arc-locally semicomplete digraph. Then, $D$ is either a diperfect digraph or an odd extended cycle of length at least five. 
\end{theorem}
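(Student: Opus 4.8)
The plan is to dispatch the two alternatives using the machinery already assembled, so the argument is essentially a short case analysis. First I would invoke Lemma~\ref{arc-no-cc-perf}: if $D$ contains no induced odd cycle of length at least five, then $D$ is diperfect and we are done. For the remainder I would assume that $D$ contains an induced odd cycle $C$ of length at least five and aim to conclude that $D$ is an odd extended cycle of length at least five.

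Since $C$ is strongly connected, all of its vertices lie in a single strong component $Q$ of $D$. As an induced subdigraph of $D$, the digraph $Q$ is again arc-locally in-semicomplete (the defining forbidden-configuration property is hereditary), and $Q$ is strong by definition of a strong component; moreover $Q$ contains the induced cycle $C$ of length at least five. Hence Lemma~\ref{arc-cycle-extended-cycle} applies and yields that $Q$ induces an extended cycle $Q[X_1,\ldots,X_k]$. The one elementary point to check is that in an extended cycle every cycle has length exactly $k$: the only edges run $X_i \to X_{i+1}$, so any cycle must traverse the classes $X_1,\ldots,X_k$ once each in cyclic order. In particular the induced odd cycle $C \subseteq Q$ has length $k$, so $k$ is odd and $k \geq 5$; that is, $Q$ induces an odd extended cycle of length at least five. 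Lemma~\ref{lem_arc_semi} now gives $V(D)=V(Q)$, so $D=Q$ is an odd extended cycle of length at least five, which completes the case analysis.

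I do not expect a genuine obstacle here, since the substance of the theorem has been absorbed into Lemmas~\ref{arc-no-cc-perf}, \ref{arc-cycle-extended-cycle} and \ref{lem_arc_semi}; the only care needed is the bookkeeping that an induced odd cycle of length at least five sitting inside an extended cycle forces the extended cycle itself to be odd of length at least five (rather than, say, of length three), which follows from the remark that all cycles of an extended cycle share the same length. If desired, one could also note that the two outcomes are mutually exclusive, since the underlying graph of an odd extended cycle of length at least five contains an induced odd hole and hence is not perfect.
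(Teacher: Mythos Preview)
Your proof is correct and follows the same route as the paper's: invoke Lemma~\ref{arc-no-cc-perf} for the diperfect case, then pass to the strong component $Q$ containing the induced odd cycle, apply Lemma~\ref{arc-cycle-extended-cycle}, and finish with Lemma~\ref{lem_arc_semi}. One small correction worth making: in an extended cycle $Q[X_1,\ldots,X_k]$ a (not necessarily induced) cycle can have length any multiple of $k$, not only $k$; what you actually need, and what holds, is that every \emph{induced} cycle has length exactly $k$ (a second visit to some $X_i$ would create a chord), and since $C$ is induced this gives $k$ odd and $k\ge 5$ as desired.
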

\begin{proof}
Let $D$ be a connected arc-locally semicomplete digraph. If $D$ contains no induced odd cycle of length at least five, then by Lemma~\ref{arc-no-cc-perf} the digraph $D$ is diperfect. Thus, let $C$ be an induced odd cycle of length at least five. Let $Q$ be the strong component that contains $C$. By Lemma~\ref{arc-cycle-extended-cycle}, the strong component $Q$ induces an odd extended cycle of length of at least five. Then, by Lemma~\ref{lem_arc_semi} we conclude that $V(D)=V(Q)$. 
\qed
\end{proof}

%% Loading bibliography style file
%%\bibliographystyle{model1-num-names}
\bibliographystyle{cas-model2-names}
%%\bibliographystyle{elsarticle-num}
%%\bibliographystyle{elsarticle-num}
%% Loading bibliography database

%%\bibliographystyle{elsarticle-num}

\bibliography{paper}

\end{document}